\documentclass[a4paper, 11pt]{article}

\usepackage[titletoc,toc,title]{appendix}

\sloppy
\input xy   
\xyoption{all}
\usepackage{amsmath, amsthm, amsfonts, amssymb}
\usepackage{enumerate}
\usepackage{stackrel}
\usepackage{colordvi}
\usepackage{hyperref}
\usepackage[english]{babel}
\usepackage{color}
\usepackage{verbatim}
\usepackage[all]{xy}
\usepackage{graphicx}
\usepackage{mathrsfs}



\DeclareMathAlphabet{\mathcalligra}{T1}{calligra}{m}{n}
\DeclareMathAlphabet{\mathscrmin}{T1}{mathscr}{m}{n}

\numberwithin{equation}{section}
\theoremstyle{plain}
  \begingroup
        \newtheorem{theorem}[equation]{Theorem}
        
        \newtheorem{proposition}[equation]{Proposition}
        \newtheorem{corollary}[equation]{Corollary}

	    \newtheorem{definition}[equation]{Definition}
        \newtheorem{notation}[equation]{Notation}

\endgroup

\usepackage{amsmath,amssymb,amsthm, xspace,a4wide,color}
\theoremstyle{definition}
  \begingroup
        \newtheorem{remark}[equation]{Remark}
        \newtheorem{example}[equation]{Example}

\endgroup

\newcommand{\alg}[1]
{(#1,\overline{#1})}





\newcommand{\cc}{\mathcal}

\newcommand{\ff}{\mathsf}

\newcommand{\de}{definition}
\newcommand{\prop}{proposition}
\newcommand{\A}{\mathcal{A}}
\newcommand{\B}{\mathcal{B}}

\newcommand{\K}{\mathcal{K}}
\newcommand{\Cat}{\mathcal{C}at}
\newcommand{\eps}{\varepsilon}


\newcommand{\mr}[1]{\overset {#1} {\longrightarrow}}

\newcommand{\xr}[1]{\xrightarrow {#1}}

\newcommand{\Mr}[1]{\overset {#1} {\Longrightarrow}}

\newcommand{\mrpairviejo}[2]
   {
    \xymatrix@C=5ex@R=2.4ex
            {
             {} \ar@<1.6ex>[r]^{#1} 
	            \ar@<-1.1ex>[r]^{#2} 
	         & {}
            }
   }

\newcommand{\mrpair}[2]
   {
    \xymatrix@C=5ex@R=2.4ex
            {
             {} \ar@<1ex>[r]^{#1} 
	            \ar@<-1ex>[r]_{#2} 
	         & {}
            }
   }

 \newcommand{\mrpairc}[2]
   {
    \xymatrix@C=5ex@R=2.4ex
            {
             {} \ar@<1ex>[r]^{#1} 
	            \ar@<-1ex>[r]|{o}_{#2} 
	         & {}
            }
   }

 \newcommand{\mrpaircc}[2]
   {
    \xymatrix@C=5ex@R=2.4ex
            {
             {} \ar@<1ex>[r]|{o}^{#1} 
	            \ar@<-1ex>[r]|{o}_{#2} 
	         & {}
            }
   }

\newcommand{\mlpair}[2]
   {
    \xymatrix@C=5ex@R=2.4ex
            {
             {} 
              & {} \ar@<1.0ex>[l]_{#2} 
	          \ar@<-1.7ex>[l]_{#1}
            }
    }


\newcommand{\cellrd}[3] 
 {
  \xymatrix@C=7ex@R=2.4ex
         {
          {} \ar@<1.6ex>[r]^{#1} 
             \ar@{}@<-1.3ex>[r]^{\!\! {#2} \, \!\Downarrow}
             \ar@<-1.1ex>[r]_{#3} 
          & {}
         }
 }
 \newcommand{\modif}[3] 
 {
  \xymatrix@C=7ex@R=2.4ex
         {
          {} \ar@<1.6ex>@{=>}[r]^{#1} 
             \ar@{}@<-1.3ex>@{=>}[r]^{\!\! {#2} \, \!\downarrow}
             \ar@<-1.1ex>[r]_{#3} 
          & {}
         }
 }
 \newcommand{\scellrd}[3] 
 {
  \xymatrix@C=4.5ex@R=2.4ex
         {
          {} \ar@<1.6ex>[r]^{#1}
             \ar@{}@<-1.3ex>[r]^{\!\! {#2} \, \!\Downarrow}
             \ar@<-1.1ex>[r]_{#3}
          & {}
         }
}

\newcommand{\cellld}[3] 
 {
  \xymatrix@C=6ex@R=2.4ex
         {
            {} 
          & {} \ar@<1.0ex>[l]^{#3} 
          \ar@{}@<-1.7ex>[l]^{\!\! {#2} \, \!\Downarrow}
	                                 \ar@<-1.7ex>[l]_{#1}
         }
 }

\newcommand{\cellpairrd}[4] 
 {
  \xymatrix@C=10ex@R=2.4ex
         {
          {} \ar@<1.6ex>[r]^{#1} 
             \ar@{}@<-1.3ex>[r]^{\!\! {#2} \, \!\Downarrow 
                                 \;\; {#3} \, \!\Downarrow }
             \ar@<-1.1ex>[r]_{#4} 
          & {}
         }
 }

\newcommand{\cellpairrdcorto}[4] 
 {
  \xymatrix@C=6ex@R=2.4ex
         {
          {} \ar@<1.6ex>[r]^{#1} 
             \ar@{}@<-1.3ex>[r]^{\!\! {#2} \, \!\Downarrow 
                                 \;\; {#3} \, \!\Downarrow }
             \ar@<-1.1ex>[r]_{#4} 
          & {}
         }
 }

\newcommand{\cellpairrdc}[4] 
 {
  \xymatrix@C=10ex@R=2.4ex
         {
          {} \ar@<1.6ex>[r]^{#1} 
             \ar@{}@<-1.3ex>[r]^{\!\! {#2} \, \!\Downarrow 
                                 \;\; {#3} \, \!\Downarrow }
             \ar@<-1.1ex>[r]|{o}_{#4} 
          & {}
         }
 }
 
\newcommand{\cellpairrdcc}[4] 
 {
  \xymatrix@C=10ex@R=2.4ex
         {
          {} \ar@<1.6ex>[r]|{o}^{#1} 
             \ar@{}@<-1.3ex>[r]^{\!\! {#2} \, \!\Downarrow 
                                 \;\; {#3} \, \!\Downarrow }
             \ar@<-1.1ex>[r]|{o}_{#4} 
          & {}
         }
 }
 


   \newcommand{\soLim}[2]
   {
    \underset{#1}{\underleftarrow{\ff{\sigma \omega Lim}}}
    \; {#2}
   }

   \newcommand{\soopLim}[2]
   {
    \underset{#1}{\underleftarrow{\ff{\sigma \omega opLim}}}
    \; {#2}
   }



\newcommand{\dcell}[1]  
          {
					 \ar@<8pt>@{-}[d]+<-4pt,8pt> 
           \ar@<-8pt>@{-}[d]+<4pt,8pt>
           \ar@{}[d]|{#1}
          }

\newcommand{\dcellb}[1]   
          {
           \ar@<10pt>@{-}[d]+<-5pt,8pt> 
           \ar@<-10pt>@{-}[d]+<5pt,8pt>
           \ar@{}[d]|{#1}
          }

\newcommand{\deq}        
         {
          \ar@{=}[d]
         }
        
\newcommand{\dreq}       
         {
          \ar@{=}[dr]
         }

\newcommand{\dleq}       
         {
          \ar@{=}[dl]
         }

\newcommand{\dccell}[1]    
          {
           \ar@{-}[ld] 
           \ar@{-}[rd] 
           \ar@{}[d]|{#1}  
          }

\newcommand{\dcellbb}[1]   
          {
           \ar@<20pt>@{-}[d]+<-10pt,12pt> 
           \ar@<-20pt>@{-}[d]+<10pt,12pt>
           \ar@{}[d]|{#1}
          } 

\newcommand{\dl}    
          {                        
           \ar@<-2pt>@{-}[d]+<4pt,8pt>
          }

\newcommand{\dr}    
          {                        
           \ar@<2pt>@{-}[d]+<-4pt,8pt> 
          }
\newcommand{\dc}[1]    
          {                        
           \ar@{}[d]|{#1}  
          }
\newcommand{\dcr}[1]    
          {                        
           \ar@{}[dr]|{#1}  
          }

\newcommand{\uccell}[1]      
          { 
           \ar@{-}[ur] 
           \ar@{}[u]|{#1} 
           \ar@{-}[ul] 
          }

\newcommand{\uccellb}[1]     
          { 
           \ar@<-1ex>@{-}[ur] 
           \ar@{}[u]|{#1} 
           \ar@<1ex>@{-}[ul] 
          }

\newcommand{\dcellop}[1]  
          {
					 \ar@<6pt>@{-}[d]+<6pt,8pt> 
           \ar@<-6pt>@{-}[d]+<-6pt,8pt>
           \ar@{}[d]|{#1}
          }

\newcommand{\dcellopb}[1]  
          {
					 \ar@<7pt>@{-}[d]+<7pt,8pt> 
           \ar@<-7pt>@{-}[d]+<-7pt,8pt>
           \ar@{}[d]|{#1}
          }

\newcommand{\did}{\ar@2{-}[d]}

\newcommand{\op}[1]
          {
           \ar@{-}[ld] 
           \ar@{-}[rd] 
           \ar@{}[d]|{#1}  
          }

\newcommand{\cl}[1]
          { 
           \ar@{-}[ur] 
           \ar@{}[u]|{#1} 
           \ar@{-}[ul] 
          }

\begin{document}

\title{A general limit lifting theorem for 2-dimensional monad theory}

\author{Szyld M.}

\maketitle

\begin{abstract}
We give a definition of weak morphism of $T$-algebras, for a $2$-monad $T$, with respect to an arbitrary family $\Omega$ of $2$-cells of the base $2$-category. By considering particular choices of $\Omega$, we recover the concepts of lax, pseudo and strict morphisms of $T$-algebras. We give a general notion of weak limit,
and define what it means for such a limit to be compatible with another family of $2$-cells.
These concepts allow us to prove a limit lifting theorem which unifies and generalizes three different previously known results of $2$-dimensional monad theory.
Explicitly, by considering the three choices of $\Omega$ above our theorem has as corollaries the lifting of oplax (resp. $\sigma$, which generalizes lax and pseudo, resp. strict) limits to the $2$-categories of lax (resp. pseudo, resp. strict) morphisms of $T$-algebras.
\end{abstract}

 
\section{Introduction} \label{sec:intro}

The notion of a $2$-monad $T$ in an arbitrary base $2$-category $\K$ goes back to at least \cite{SMonad}, we refer the reader to the expository paper \cite[\S 3]{Review} for a detailed account of this theory in its early stages.
The relevance of 
$2$-dimensional monad theory, that is
the study of algebras of a $2$-monad and its morphisms, 
as a branch of $2$-dimensional universal algebra
is nowadays very well established.
The \emph{classic} article \cite{K2dim} is a standard reference, and we will use most of the notation as it is there.

It is well known that
the notion of strict algebra morphism coming from $\Cat$-enriched monad theory has to be relaxed in order to
go beyond this theory and into the real and deep $2$-dimensional one
(see \cite{K2dim}, \cite{Llax} and also \cite[3.5]{Review} for examples). 
In \cite[\S 1.2]{K2dim} lax and pseudomorphisms of $T$-algebras are introduced.
Instead of requiring the commutativity of a diagram, lax morphisms have a structural $2$-cell, which is required to be invertible for the pseudomorphism notion (which are referred to simply as morphisms due to its relevance for applications). 
Their choice of the notation $T$-$Alg$ for the $2$-category of pseudomorphisms of $T$-algebras is thus appropriate, but the notation $T$-$Alg_p$ is better suited for our point of view in this article, which is to consider all these notions of morphisms as particular cases of a general one. We do this in Section \ref{sec:weakmorphisms}, by considering an arbitrary family of $2$-cells where the structural $2$-cells are required to belong to.

The technique of fixing an arbitrary family of $2$-cells and considering weak morphisms and (conical) weak limits with respect to it is, as far as we know, due to Gray \cite{Gray}. A significant fact, observed by Street in \cite{S}, is that conical weak limits in this sense are as general as weighted strict limits (i.e. $2$-limits, as considered in \cite{KElem}), in the sense that any such limit can be expressed as one of the other type. Informally speaking, we may say that the weakening of the limit allows to work with only conical ones, without losing generality. 

 We give in Section \ref{sec:solimits} a notion of weighted weak limit, 
 appropriate for working with weak algebra morphisms,
 which has both notions above (that is, conical weak limits and weighted strict limits) as particular cases in a direct way. 
 If we interpret the result of \cite{S} in this context (we do this in \S \ref{sub:weightedconical}), we obtain that every weighted weak limit can be expressed as a conical weak limit.
 This result allows us to apply our Theorem \ref{teo:main}, which is a result for conical weak limits, to general weighted ones. 
 
We note that, since every weighted weak limit can be expressed as a conical weak limit, using \cite{S} again we have that ultimately it is a strict limit, but it is convenient to disregard this fact and work solely with the weak limit. 
 This is a situation that has previously happened in the literature with lax and pseudolimits: while ultimately being strict limits, these are the appropriate notions for many aspects of $2$-dimensional category theory and should not be considered as strict limits for these applications.

An important result for monad theory is the lifting of limits along the forgetful functor of the categories of algebras. 
To this subject we devote Sections \ref{sec:weightedlimitsofalgebras} and \ref{sec:weaklimitsofalgebras}.
The question of which limits can be lifted has been analysed for various cases, and it is also the subject of the recent article
\cite{LSAdv}, whose introduction we also suggest to the reader.

\begin{itemize}
 \item For the strict theory (or more generally for the $\cc{V}$-enriched theory, \cite{D}), it has been long known that all strict limits lift.

 \item For the $2$-category of pseudomorphisms, this is treated in \cite[\S 2]{K2dim}. Now it is no longer the case that all strict limits lift. First products, inserters and equifiers are lifted, and then this is used to lift other limits which are constructed from them, their main result being the lifting of pseudolimits, since these are the most important ones for its applications.
 
 \item Limits in the $2$-category of lax morphisms are considered of secondary interest in \mbox{\cite[Remark 2.9]{K2dim},} but an interest in them has justifiedly grown since then. In \cite{Llax} this subject is investigated, its most general theorem being the lifting of oplax limits. 

\end{itemize}

The article \cite{LSAdv} can be considered as a continuation of the results of \cite{K2dim} and \cite{Llax}, though in a direction quite different to ours.
Their main result is a complete characterization (in terms of the weights) of those $2$-limits which lift to the $2$-category $T$-$Alg_\ell$ of lax morphisms of algebras for any $2$-monad $T$. They do so by introducing a particular base category $\cc{F}$, and working in \mbox{$\cc{F}$-enriched} category theory, in particular with $\cc{F}$-limits. 
This allows them to express in the language of enriched category theory properties about the distinguished $2$-subcategory of \mbox{$T$-$Alg_\ell$} consisting of the 
strict algebra morphisms. 
A motivation for these concepts lies in the lifting results of \cite[\S 4]{Llax}, 
where the hypothesis for the lifting of inserters and equalizers to \mbox{$T$-$Alg_\ell$}
 involve strict morphisms of algebras, these results are generalized to the \mbox{$\cc{F}$-enriched} case in \cite[3.5.3,3.5.4]{LSAdv}.

The main reason for the present paper's approach to this subject lies in my motivation, which was to obtain a proof, as simple as possible, of the lifting of $\sigma$-limits (see \cite{DDS1})
to the $T$-$Alg_p$ $2$-categories, and of the properties of such lifted limits. 
This result interested me particularly, as it allows for a different proof of \cite[Coro. 2.6.4]{DDS1}, which is a fundamental result for the theory of flat 2-functors that we develop there.
Though, with a careful checking, one could deduce this result from the results of \cite{K2dim}, I felt that this didn't lead to a satisfying answer to the question of \emph{why} such limits can be lifted.
There is an intuitive \emph{order} between the strictness of the different $2$-categories of morphisms of algebras, and I began thinking 
(inspired in part by our previous work that led to \cite[Prop. 2.6.2]{DDS1}) 
if one could think of such an order for the different types of limits. This is what led me to consider the general notions of $\omega$-$T$-morphism and $\sigma$-$\omega$-limit, which allowed me to formalize the intuition that, the more we relax the notion of morphism, the less limits we can lift.
Corollary \ref{coro:corogeneral} expresses this idea formally and clearly (relaxing the morphisms corresponds simply to augmenting the family $\Omega'$ of $2$-cells), and while it is not the most general result of the paper, it was the most satisfactory for me.
In particular, it provided me with an answer to my question (see Remark \ref{rem:casosk2dimysigmalim}):
$\sigma$-limits can be lifted to $T$-$Alg_p$ because they are $\sigma$-$\omega$-limits with respect to $\Omega_p$.

One fact that distinguishes the lax theory from the other two, as shown in the three items above, is the appearance of the ``op'' prefix. 
One cannot avoid to reverse the direction of some $2$-cell when working with lax morphisms; if instead of oplax one wishes to lift lax limits, then one must consider colax instead of lax morphisms (\cite[Theorem 4.10]{Llax}). 
In fact, it is convenient to think that the ``op'' is always present, but can be ommitted when restricting to morphisms with invertible structural $2$-cells (that is, pseudomorphisms and strict morphisms). As it is usually the case, the study of a more general theory yields light on some aspects of its particular cases as well; 
here in the proof of Theorem \ref{teo:main} regarding the lifting of weak op-limits, the necessity of reversing a direction is made transparent, see \eqref{eq:diagconop}.

Theorem \ref{teo:main} is our main contribution in this article, and it has the three main results of the items above as corollaries. 
Furthermore, unlike the one in \cite[Theorem 2.6]{K2dim}, its proof is direct, in the sense that it does not depend on the construction of the limit to be lifted in terms of other limits. 
Also, Propositions \ref{prop:products}, \ref{prop:inserters} and \ref{prop:equifiers} generalize the corresponding results of \cite{K2dim} and \cite{Llax} regarding the lifting of strict limits. A comment regarding its relevance can be found at the beginning of Section \ref{sec:weightedlimitsofalgebras}.

A fundamental notion which allows these generalizations is that of $\Omega$-compatibility (with respect to a family $\Omega$ of $2$-cells) for a limit,  see \S \ref{sub:compatible}. 
Informally, what it means for a limit to be $\Omega$-compatible is that the bijection at the level of $2$-cells given by its $2$-dimensional universal property restricts to $\Omega$. 
This notion holds automatically if $\Omega$ consists of the invertible \mbox{$2$-cells,} and this fact is implicitly used in the proofs of the first three propositions of \cite[\S 2]{K2dim}. By making it explicit, its generalizations in Section \ref{sec:weightedlimitsofalgebras} follow, with proofs that involve the same computations as the ones in \cite{K2dim}. We note that the proofs of the results in \cite{Llax} that we generalize here are substantially different to those of \cite{K2dim}, and thus of ours.
Also, our results have as corollaries the results of \cite{Llax} in the stronger form in which they appear in \cite[\S 6]{Llax}, requiring the algebra morphisms to be pseudo, not necessarily strict. 
Regarding \cite{LSAdv} again, 
since that article 
restricts to the case of enhanced 2-categories of strict and lax morphisms (though the authors hope to address other cases in future papers), 
I believe thus our results of Section \ref{sec:weightedlimitsofalgebras} probably  
admit analogous \mbox{$\cc{F}$-enriched} versions, and this should 
be relevant for the case of enhanced 2-categories of pseudo and lax morphisms.

There is an extra property, present in the mentioned results of \cite{K2dim} and \cite{Llax}, of the projections of the lifted limits. Namely, they are strict, and they (jointly) detect strictness.
We also generalize this property under the light of $\Omega$-compatibility: we show as part of our results that the projections of the lifted limits are strict, and that they (jointly) detect $\Omega$-ness if the limit in the base category is $\Omega$-compatible. Since for the strict case (that is, when $\Omega$ consists only of the identities) every limit is $\Omega$-compatible, ours is indeed a generalization.

\section{Weak morphisms of algebras} \label{sec:weakmorphisms}

We fix an arbitrary family $\Omega$ of $2$-cells of a $2$-category $\cc{K}$, closed under horizontal and vertical composition, and containing all the  identity $2$-cells. 
Whenever we consider families of $2$-cells, we will assume these axioms to hold. 

We refer to \cite{K2dim} for the basic definitions of $2$-dimensional monad theory. 
We consider a $2$-monad $T = (T,m,i)$ on $\cc{K}$, and strict algebras of $T$.
In this section we will define the $2$-category of \emph{weak morphisms} of $T$-algebras (with respect to $\Omega$), which has as particular cases the $2$-categories 
of strict, pseudo and lax morphisms of $T$-algebras.

\begin{\de} \label{def:TalgW}
 A weak morphism, or $\omega$-morphism (with respect to $\Omega$), $(A,a) \xr{(f,\overline{f})} (B,b)$ between two $T$-algebras consists of an arrow $A \mr{f} B$ and a $2$-cell in $\Omega$, $$\xymatrix{TA \ar[r]^{Tf} \ar[d]_a \ar@{}[dr]|{\Downarrow \overline{f}} & TB \ar[d]^b \\
				  A \ar[r]_{f} & B}$$
 \noindent These data is subject to coherence conditions (with respect to $m,i$) that we consider safe to omit since the reader can find them in \cite[(1.2),(1.3)]{K2dim}, \cite[(3.17),(3.18)]{Review}, \cite[\S2]{Llax}.

 $2$-cells between $\omega$-morphisms $(f,\overline{f}) \Mr{\alpha} (g,\overline{g})$ are arbitrary (that is, not necessarily in $\Omega$) $2$-cells $f \Mr{\alpha} g$ required to satisfy the equation 
 
 \begin{equation} \label{eq:alg2cell}
  \vcenter{\xymatrix@R=1pc{ TA \ar@<2ex>[r]^{Tf} \ar@<-2ex>[r]_{Tg} \ar@{}[r]|{\Downarrow \; T\alpha} \ar[dd]_{a} & TB \ar[dd]^b \\ 
  \ar@{}[dr]|{\Downarrow \; \overline{g}} \\
		      A \ar[r]_g & B}}
  = 
  \vcenter{\xymatrix@R=1pc{TA \ar[r]^{Tf} \ar[dd]_a \ar@{}[dr]|{\Downarrow \; \overline{f}} & TB \ar[dd]^b \\ & \\
		      A  \ar@<2ex>[r]^{f} \ar@<-2ex>[r]_{g} \ar@{}[r]|{\Downarrow \; \alpha} & B}}
 \end{equation}

 In this way, with the evident laws of composition, we have a $2$-category $T$-$Alg_{\omega}^{\Omega}$ of \mbox{$T$-algebras} and $\omega$-morphisms, and a forgetful $2$-functor $T$-$Alg_{\omega}^{\Omega} \xr{U_{\omega}^{\Omega}} \cc{K}$. We refer to these $2$-categories as $2$-categories of weak $T$-algebra morphisms, or $\omega$-$T$-morphisms. Whenever possible, we will omit $\Omega$ from the notation and write $T$-$Alg_{\omega}$, $U_\omega$.
\end{\de}

\begin{remark} \label{rem:darvueltamorf}
We note (see also \cite[p.3]{K2dim}, \cite[Remark 2.1]{Llax}) that  reversing the direction of $\overline{f}$ yields a notion of $co \omega$-morphism $(f,\overline{f})$, we denote the $2$-category so defined by $T$-$Alg_{co \omega}$. Since a $co \omega$-morphism $(f,\overline{f})$ is just a $\omega$-morphism for the $2$-monad $T^{co}$ in $\K^{co}$, then we have $T$-$Alg_{co \omega}^{co} = T^{co}$-$Alg_{\omega}$.

Note that if $\Omega$ has only invertible $2$-cells, then taking the inverse of $\overline{f}$ yields an isomorphism of categories $T$-$Alg_{co \omega} \cong T$-$Alg_{\omega}$.
\end{remark}

\begin{example} \label{ex:Talgspl}
 By considering the families $\Omega_\gamma$, $\gamma=s,p,\ell$ consisting respectively of the identities, the invertible $2$-cells and all the $2$-cells of $\cc{K}$, we recover the $2$-categories $T$-$Alg_\gamma$ introduced in \cite{K2dim} (note that their $T$-$Alg$ is our $T$-$Alg_p$), that is:
 
 \begin{center}
 $T$-$Alg_{\omega}^{\Omega_s} = T$-$Alg_s$,  
 $\quad \quad T$-$Alg_{\omega}^{\Omega_p} = T$-$Alg_p$,
 $\quad \quad T$-$Alg_{\omega}^{\Omega_\ell} = T$-$Alg_\ell$.
 \end{center}

 In this way, all the known examples of such $2$-categories, found for example in \cite[\S6]{K2dim}, \mbox{\cite[\S1,\S5]{Llax},} are $2$-categories of $\omega$-$T$-morphisms. 
 \end{example}

 \section{$\sigma$-$\omega$-limits} \label{sec:solimits}
 
We fix throughout this section a family $\Sigma$ of arrows of a $2$-category $\A$, closed under composition and containing all the identities, and a family $\Omega$ of $2$-cells of a $2$-category $\B$. 
 
 In this section we will define the notion of weighted $\sigma$-$\omega$-limit (with respect to $\Sigma$ and $\Omega$), which is a generalization of weighted strict limits (\cite{KElem}) and of Gray's cartesian quasi-limits (\cite{Gray}). The notion of weighted strict limit can be recovered by considering a particular choice of $\Sigma$ and $\Omega$, and the notion of cartesian quasi-limit is exactly the notion of conical $\sigma$-$\omega$-limit, thus it corresponds to the particular weight $\triangle 1$ constant at the unit category.
 
 On the other hand, we show in Proposition \ref{prop:weightedcomoconical} that any weighted $\sigma$-$\omega$-limit can be expressed as a conical $\sigma$-$\omega$-limit (thus as a quasi-limit), and therefore by a result of Street (\cite[Theorem 14]{S}) also as a weighted strict limit. Though ultimately we are working only with strict limits, the notion of (weighted) $\sigma$-$\omega$-limit turned out to be the most convenient one for working with $2$-categories of weak morphisms.
 
 We begin by recalling from \cite{Gray} (while adapting the notation to one more adapted to the current literature, in particular to \cite{DDS1})
 the concepts of $\sigma$-$\omega$-natural transformation and conical $\sigma$-$\omega$-limit (in both cases with respect to $\Sigma$ and $\Omega$). The case in which $\Omega = \Omega_p$ consists of the invertible $2$-cells is considered with great detail in \cite{DDS1}, 
 where these concepts are referred to as $\sigma$-natural transformation and conical $\sigma$-limit.  
 
 \begin{\de} \label{de:sonatural}
  A $\sigma$-$\omega$-natural transformation between two $2$-functors $\A \mrpair{F}{G} \B$ is a lax natural transformation $\{FA \mr{\theta_A} GA\}_{A \in \A}$,  $\{Gf \theta_A \Mr{\theta_f} \theta_B Ff\}_{A\mr{f}B \in \A}$ such that $\theta_f$ is in $\Omega$ for each $f \in \Sigma$.
  
  A modification $\theta \mr{\rho} \theta'$ between 
  $\sigma$-$\omega$-natural transformations is the same as a modification between the underlying 
  lax natural transformations, that is a family $\{\theta_A \Mr{\rho_A} \theta'_A\}_{A \in \A}$ of $2$-cells of $\B$ such that for all $A \mr{f} B \in \A$, $\theta'_f \circ Gf \rho_A = \rho_B Ff \circ \theta_f$. 
  
  The $2$-category $Hom_{\sigma,\omega}^{\Sigma,\Omega}(\A,\B)$ has as objects the $2$-functors, as arrows the $\sigma$-$\omega$-natural transformations and as $2$-cells the modifications. Whenever possible, we will omit $\Sigma$ and $\Omega$ from the notation. We refer to these $2$-categories as $2$-categories of $\sigma$-$\omega$-natural transformations.
 \end{\de}

\begin{notation} \label{epsilonnotation}
 Consider the set $\cc{L}_{\A,\B}$ consisting of one label $(\sigma^\Sigma,\omega^\Omega)$ for each choice of $\Sigma$ and $\Omega$. 
There is a (partial) order in $\cc{L}_{\A,\B}$ defined by $(\sigma^\Sigma,\omega^\Omega) \leq (\sigma^{\Sigma'},\omega^{\Omega'})$ if and only if $\Sigma' \subseteq \Sigma$, $\Omega \subseteq \Omega'$. Note that in this case we have an inclusion
 $Hom_{\sigma,\omega}^{\Sigma,\Omega}(\A,\B) \stackrel{i}{\hookrightarrow} Hom_{\sigma,\omega}^{\Sigma',\Omega'}(\A,\B)$.
  
 Consider $\A_0$ the family of all the arrows of $\A$, and $\A_{id}$ consisting only of the identities. For $\gamma = s,p,\ell$, recall 
the families $\Omega_\gamma$ of Example \ref{ex:Talgspl}. 
  We have the labels $\gamma$ in the set $\cc{L}_{\A,\B}$ (making sense for any $\A,\B$): 
 
 $$s = (\sigma^{\A_0},\omega^{\Omega_s}), \quad p = (\sigma^{\A_0},\omega^{\Omega_p}), \quad \ell = (\sigma^{\A_{id}},\omega^{\Omega_\ell})$$ 
 
 Note that $s$ is the bottom element of $\cc{L}_{\A,\B}$, and $\ell$ is the top one.
 
For any label $\eps = (\sigma^\Sigma,\omega^\Omega) \in \cc{L}_{\A,\B}$ we denote $Hom_{\eps}(\A,\B) = Hom_{\sigma,\omega}^{\Sigma,\Omega}(\A,\B)$. In this way, for each choice of $\gamma$ as above, we recover the usual $2$-categories $Hom_{\gamma}(\A,\B)$ as $2$-categories of $\sigma$-$\omega$-natural transformations. 
\end{notation}

 \begin{\de} \label{de:conicalsolimit}
 Given a $2$-functor $\A \mr{F} \B$, we define the category of $\sigma$-$\omega$-cones with vertex $E$, $Cones_{\sigma,\omega}(E,F) = Hom_{\sigma,\omega}(\A,\B)(\triangle E,F)$, note that a $\sigma$-$\omega$-cone is a lax cone \mbox{$\{E \mr{\theta_A} FA\}_{A\in \cc{A}}$,}
\mbox{$\{\theta_{A} Ff \Mr{\theta_f} \theta_B \}_{A\mr{f} B \in \A}$} such that $\theta_f$ is in $\Omega$ for every $f$ in $\Sigma$. 

  The (conical) $\sigma$-$\omega$-limit 
  of $F$ is the universal $\sigma$-$\omega$-cone, denoted \mbox{$\{\soLim{A\in \A}{FA} \mr{\pi_A} FA\}_{A\in \A}$,} \mbox{$\{\pi_{A} Ff \Mr{\pi_f} \pi_B \}_{A\mr{f} B \in \A}$} in the sense that for each \mbox{$B \in \B$,} post-composition with $\pi$ is an isomorphism of categories
\begin{equation} \label{eq:ssssconical}
\; \B(B,\soLim{A\in \A}{FA}) \mr{\pi_*} Cones_{\sigma,\omega}(B,F). 
\end{equation}
We refer to the arrows $\pi_A$, for $A \in \A$, as the projections of the limit.
 \end{\de}

 We should warn the reader that in \cite{DDS1} we chose to use a letter $w$ and denote $w$-$\sigma$-cone to indicate that the cone is taken with respect to a weight, in order to distinguish it from a (conical) $\sigma$-cone. For obvious reasons, it is convenient not to use such a notation here. Thus, in the present article, $\sigma$-$\omega$-cones can be either conical or weighted, with the presence of a weight indicating the latter.
 
 We will now generalize this notion to the notion of weighted $\sigma$-$\omega$-limit. We note that the category of cones that is needed in order to define weighted $\sigma$-$\omega$-limits can't be given (at least in an evident way) as a $\cc{H}om_{\sigma,\omega}$ category, as it is done for strict weighted limits and for $\sigma$-limits (in particular for weighted pseudo and lax limits as in \cite[\S5]{KElem}), since we do not have a family of $2$-cells of $\Cat$ to play the role of $\Omega$. What we do instead is to describe with detail the notion of lax cone (with respect to a weight), and this allows us to define which of the structural $2$-cells should be in $\Omega$ for it to be a $\sigma$-$\omega$-cone.
We remark that we could consider only the conical case and use \eqref{eq:isoBird} in order to apply this definition to the weighted case, but we prefer the more explicit approach of Definition \ref{de:NE} (and also of Definition \ref{def:compatible} for the notion of compatibility). From this point of view, Propositions \ref{prop:weightedcomoconical} and \ref{prop:weightedcomoconicalcompatible} confirm that our definitions are correct.
 
\begin{\de} \label{de:NE} 
Given $2$-functors $\A \mr{W} \Cat$, $\A \mr{F} \B$, and $E$ an object of $\B$, we denote 
$Cones_{\ell}^W(E,F) = \cc{H}om_{\ell}(\A,\Cat)(W,\B(E,F-))$. This is the category of lax cones (with respect to the weight $W$) 
for $F$ with vertex $E$. 
Note that a lax cone $W \Mr{\theta} \B(E,F-)$ is given by the data \mbox{$\{WA \mr{\theta_A} \B(E,FA)\}_{A\in \A}$,} $\{(Ff)_* \theta_A \Mr{\theta_f} \theta_B Wf\}_{A\mr{f} B \in \A}$. This is given in turn by its components 
$$\left\{ 
E \cellrd{\theta_A(x)}{\theta_A(\varphi)}{\theta_A(y)} FA
\right\}_{x \mr{\varphi} y \in WA}, \quad \left\{ \vcenter{\xymatrix@R=1pc{ & FA \ar[rd]^{Ff} \\ E \ar[ru]^{\theta_A(x)} \ar[rr]_{\theta_B(Wf(x))}^{ (\theta_f)_x \; \Downarrow} && FB}}\right\}_{A\mr{f} B \in \A, x \in WA}$$

We define the category of $\sigma$-$\omega$-cones $Cones_{\sigma,\omega}^W(E,F)$ as the full subcategory of $Cones_{\ell}^W(E,F)$ consisting of those 
lax cones $\theta$ such that $(\theta_f)_x$ is in $\Omega$ for every $A\mr{f} B \in \Sigma$, and every $x \in WA$. 
For a $\sigma$-$\omega$-cone $\xi$ with vertex $E$, 
$\xymatrix@C=3ex{W \ar@{=>}[r]^>>>>{\xi} & \B(E,F-)}$,  
we have a functor $\mu_B = \xi^*$ given by precomposition with $\xi$:

\vspace{-1ex}

\begin{equation} \label{eq:ssss}
 \B(B, E ) \mr{\mu_B} Cones_{\sigma,\omega}^W(B,F)  
 \end{equation}
 
 \vspace{-2ex}
 
$$ B \cellrd{f}{\alpha}{g} E \quad \longmapsto \quad W \Mr{\xi} \B(E,F-) \cellrd{f^*}{\alpha^*}{g^*} \B(B,F-) $$

\noindent
The $\sigma$-$\omega$-limit of $F$ weighted by $W$, denoted $\{W,F\}_{\sigma,\omega}$ or more precisely $(\{W,F\}_{\sigma,\omega}, \xi)$, is a $\sigma$-$\omega$-cone $\xi$ 
with vertex $E = \{W,F\}_{\sigma,\omega}$, universal in the sense that 
$\mu_B = \xi^*$ in 
\eqref{eq:ssss} is an isomorphism.
%
We refer to the arrows $\xi_A(x)$, for $A \in \A$ and $x \in WA$, as the projections of the limit.

As usual, an equivalent formulation of the universal property is that there is a representation $\mu_B$ natural in the variable $B$ (as in \eqref{eq:ssss}), 
%
%
and $\xi$ is recovered setting $B = E$, $\xi = \mu_E(id_E)$.
\end{\de}

\begin{remark}\label{rem:oplimits}
By reversing the direction of the $2$-cells $\theta_f$ in Definition \ref{de:sonatural} we have the notion of $\sigma$-$\omega op$-natural transformation, which leads in turn to the definition of $\sigma$-$\omega op$-limit. The fact that every $\sigma$-$\omega$-limit in $\B$ is a $\sigma$-$\omega op$-limit in $\B^{co}$, and vice versa, goes back to at least \mbox{\cite[Proposition 1.5]{Bird}} where  the lax case is shown, see \cite[Remark 2.2.6]{DDS1} for details. There, for each weight $\A \mr{W} \Cat$, a $2$-functor $\A^{co} \mr{W^d} \Cat$ is constructed and we have \mbox{$\{W,F\}_{\sigma,\omega} = \{W^{d},F^{co}\}_{\sigma,\omega op}$.} 

Then, as it is usual in the literature for the lax case, we can state and prove general results for $\sigma$-$\omega$-limits, and use them for $\sigma$-$\omega op$-limits too. A similar fact is true for $\sigma$-$\omega$-colimits (that is, they are $\sigma$-$\omega$-limits in the dual $2$-category $\B^{op}$), though we don't explicitly consider them in this paper.
\end{remark}

\begin{remark}
 When $W$ is the functor $\triangle 1$, constant at the unit category, we recover Gray's notion of conical $\sigma$-$\omega$-limit. 
 To show the natural isomorphism between the categories of cones for each notion, consider the well-known isomorphism
 
 $$\cc{H}om_{\ell}(\A,\Cat)(\triangle 1, \B(E,F-)) \cong \cc{H}om_{\ell }(\A,\B)(\triangle E,F)$$
 
 which is given by the formulas, 
for $\theta$ in the left side and $\eta$ in the right side, $\eta_A=\theta_A(*)$ for $A\in \A$, $\eta_f=(\theta_f)_{*}$ for $A\mr{f} B \in \A$.
  Then this isomorphism restricts to an isomorphism
  
 $$Cones_{\sigma,\omega}^{\triangle 1}(E,F) \cong Cones_{\sigma,\omega}(E,F).$$ 
\end{remark}

\begin{example} \label{ex:varioslimits}
 We consider the families $\Omega_\gamma$, $\gamma=s,p,\ell$ of Example \ref{ex:Talgspl}, see also Notation \ref{epsilonnotation}. 
 
 \begin{enumerate}
  \item[1a.] If $\Omega = \Omega_\ell$, then $\{W,F\}_{\sigma,\omega}$ is the lax limit $\{W,F\}_{\ell}$ for any choice of $\Sigma$.
  \item[1b.] If $\Sigma = \A_{id}$, then $\{W,F\}_{\sigma,\omega}$ is the lax limit $\{W,F\}_{\ell}$ for any choice of $\Omega$.
  \item[2.] If $\Omega = \Omega_p$, then $\{W,F\}_{\sigma,\omega}$ is the $\sigma$-limit $\{W,F\}_\sigma$ of \cite[\S2]{DDS1}
  (since for each natural transformation $\theta_f$ of the structure of a lax cone, $\theta_f$ is an isomorphism if and only if each $(\theta_f)_x$ is)
  . In particular, when $\Sigma = \A_0$ then $\{W,F\}_{\sigma,\omega}$ is the pseudolimit $\{W,F\}_{p}$. 
  \item[3.] If $\Omega = \Omega_s$, and $\Sigma = \A_0$, then $\{W,F\}_{\sigma,\omega}$ is the strict limit $\{W,F\}_{s}$.
 \end{enumerate} 
\end{example}

\subsection{$\Omega$-modifications and $\Omega$-compatible limits} \label{sub:compatible}

We will now define the notion of $\Omega'$-compatible $\sigma$-$\omega$-limit, for $\Omega'$ another family of $2$-cells of $\B$. 
We begin with the conical case, for which it can be defined using the concept of $\Omega'$-modification, and consider then the general weighted case. 
  
 \begin{\de}
 Consider a family $\Omega'$ of $2$-cells of $\B$. A $\Omega'$-modification $\theta \mr{\rho} \theta'$ between two lax natural transformations is a modification $\{\theta_A \Mr{\rho_A} \theta'_A\}_{A \in \A}$ such that $\rho_A$ is in $\Omega'$ for every $A \in \A$.
 
 We denote by $Hom_{\sigma,\omega}^{\Sigma,\Omega}(\A,\B)^{\Omega'}$ the $2$-category of $2$-functors, $\sigma$-$\omega$-natural transformations (with respect to $\Sigma, \Omega$) and $\Omega'$-modifications.
 
 Also, given objects $B,C$ of $\B$, we denote by $\B^{\Omega'}(B,C)$ the category with objects $B \mr{f} C$ and arrows $f \Mr{\alpha} g$ such that $\alpha \in \Omega'$.
 \end{\de}

  \begin{\de} \label{def:conicalcompatible}
  Let $\cc{A} \mr{F} \cc{B}$, we denote \mbox{$Cones_{\sigma,\omega}(E,F)^{\Omega'} := Hom_{\sigma,\omega}(\A,\B)(\triangle E,F)^{\Omega'}$}
(recall Definition \ref{de:conicalsolimit}).  We say that the limit $L = \soLim{A\in \A}{FA}$ is compatible with $\Omega'$, or \mbox{$\Omega'$-compatible,} if for every $B \in \B$, the restriction of the isomorphism in \eqref{eq:ssssconical} to \mbox{$\B^{\Omega'}(B,L) \mr{\pi_*} Cones_{\sigma,\omega}(B,F)^{\Omega'}$}
  is still an isomorphism.
 \end{\de}

\begin{remark} \label{rem:significadocompatible}
Consider the isomorphism $\pi_*$ in \eqref{eq:ssssconical} and let $\pi_*\varphi = \theta$, $\pi_*\varphi' = \theta'$. Then, for each morphism of cones given by $\theta_A \Mr{\alpha_A} \theta_A'$, there is a unique $2$-cell $\varphi \Mr{\beta} \varphi'$ such that $\pi_A \beta = \alpha_A$. The $\Omega'$-compatibility of the limit means that, if all the components $\alpha_A$ are in $\Omega'$, then so is $\beta$ (with the other implication always holding by the equation above).
\end{remark} 
 
\begin{remark} \label{rem:compatiblegratis}
  Consider as $\Omega'$ the families $\Omega_\gamma$, $\gamma=s,p,\ell$ of Example \ref{ex:Talgspl}. Then every (conical) $\sigma$-$\omega$-limit is $\Omega'$-compatible. In fact, for $\gamma=\ell$ the condition is vacuous, and for $\gamma=p,s$ this follows from the fact that (with the notation of the previous remark) 
invertible (resp. identity) $\alpha$ and $\beta$ correspond via the isomorphism $\pi_*$ (note that 
  a modification $\alpha$ is invertible, resp. the identity if and only if each of its components $\alpha_A$ are so).
 \end{remark}

 \begin{\de} \label{def:compatible}
  Let $\cc{A} \mr{F} \cc{B}$, $\cc{A} \mr{W} \Cat$, recall from Definition \ref{de:NE} the notion of \mbox{$\sigma$-$\omega$-cone} with respect to $W$. Note that a morphism of $\sigma$-$\omega$-cones (i.e. of the underlying lax cones) is a modification given by a family of natural transformations $\theta_A \Mr{\rho_A} \theta'_A$ and thus by a family of \mbox{$2$-cells}
  $E \cellrd{\theta_A(x)}{\rho_A(x)}{\theta'_A(x)} FA $ of $\B$, one for each $A \in \A$ and each $x \in WA$. We define $Cones_{\sigma,\omega}^W(E,F)^{\Omega'}$ to be the category with the same objects as $Cones_{\sigma,\omega}^W(E,F)$, that is \mbox{$\sigma$-$\omega$-cones,} and arrows those morphisms $\rho$ such that $\rho_A(x)$ is in $\Omega'$ for each $A \in \A$ and each $x \in WA$.
  
  We say that the limit $E = \{W,F\}_{\sigma,\omega}$ is compatible with $\Omega'$, or $\Omega'$-compatible, if for every $B \in \B$, the restriction of the isomorphism in \eqref{eq:ssss} to $\B^{\Omega'}(B,E) \mr{\xi^*} Cones_{\sigma,\omega}^W(B,F)^{\Omega'}$
  is still an isomorphism.
 \end{\de}

We leave to the reader the easy task of checking that, in the case $W = \triangle 1$, both definitions coincide (see also Proposition \ref{prop:weightedcomoconicalcompatible} below).

 We now examine carefully what it means to be $\Omega'$-compatible for some limits with which we will work in Section \ref{sec:weightedlimitsofalgebras}. We refer to \cite[\S 4]{KElem} for details.
 
 \begin{example} \label{ex:inserter}
  Recall that the inserter of a parallel pair of arrows $B \mrpair{f}{g} C$ is the (strict) limit of the diagram $\{a \mrpair{u}{v} b\} \mr{F} \B$ defined in an evident way, weighted by \mbox{$\{a \mrpair{u}{v} b\} \mapsto \{1 \mrpair{0}{1} 2\}$,} where we denote $1=\{*\}$, $2=\{0 \mr{\varphi} 1\}$. A cone for this diagram, $W \mr{\theta} \B(E,F-)$ has components $\theta_a, \theta_b$ but if we denote $q = \theta_a(*)$, since $\theta_b(0)=fq$ and $\theta_b(1)=gq$ then $\theta$ is determined by $q$ and $\mu=\theta_b(\varphi): fq \mr{} gq$.  
   In a similar way, a morphism of cones, i.e. a modification $\theta \mr{\rho} \theta'$, is determined by the $2$-cell $\beta = (\rho_a)_*: q \Rightarrow q'$, since $(\rho_b)_0 = f\beta$ and $(\rho_b)_1 = g\beta$, and the requirement that $\rho_b$ is a natural transformation is the equation $\mu'(f\beta) = (g\beta)\mu$ that $\beta$ must satisfy in order to determine a modification. 
  
  From this description of the category of cones, it follows the description of the universal property of the inserter as it is found in \cite[\S4]{KElem}: it is the universal pair $(L \mr{p} B,fp \Mr{\lambda} gp)$. 
We abuse language and refer to $p$ as \emph{the} projection of the inserter, though according to Definition \ref{de:NE} so are $\xi_b(0)$ and $\xi_b(1)$. 
  The one-dimensional universal property is that for any $(q,\mu)$ as above there is a unique $E \mr{h} L$ such that $ph=q$ and $\lambda h = \mu$. The two-dimensional universal property is that for $E \mrpair{h}{k} L$ and $ph \Mr{\beta} pk$ such that $(\lambda k)(f\beta) = (g\beta)(\lambda h)$, there is a unique $2$-cell $h \Mr{\alpha} k$ with $p\alpha = \beta$. Now, from Definition \ref{def:compatible} and the computations above it is clear that the inserter is $\Omega'$-compatible if, when $\beta$ is in $\Omega'$, so is $\alpha$ (note that if $\beta=(\rho_a)_*$ is in $\Omega'$, then by the computations above so are $(\rho_b)_0$ and $(\rho_b)_1$).
 \end{example}

 \begin{example} \label{ex:equifier}
We consider also the equifier of a pair of $2$-cells $B \cellpairrd{f}{\alpha}{\beta}{g} C$, where now the weight is given by the diagram (in $\Cat$) $1 \cellpairrdcorto{0}{}{}{1} 2$.
A similar process to the one of Example \ref{ex:inserter} shows that cones are now determined by arrows $E \mr{q} B$ such that $\alpha q = \beta q$, and morphisms by $2$-cells $q \Mr{\mu} q'$.

From here it follows that the universal property of the equifier $L \mr{p} B$ is (again, as in \cite[\S4]{KElem}) that given any other $E \mr{q} B$ as above there is a unique unique $E \mr{h} L$ such that $ph=q$; and that for $E \mrpair{h}{k} L$ and $ph \Mr{\mu} pk$, there is a unique $2$-cell $h \Mr{\lambda} k$ with $p\lambda = \mu$.
We refer to $p$ as \emph{the} projection of the equifier. Now, from the above it is clear that the equifier is $\Omega'$-compatible if, when $\mu$ is in $\Omega'$, so is $\lambda$.
 \end{example}

\subsection{The expression of weighted limits as conical limits} \label{sub:weightedconical}

Consider $2$-functors $\A \mr{W} \Cat$, $\A \mr{F} \B$, $\A \mr{G} \Cat$ and the $2$-category of elements $\cc{E}l_W$ of $W$. There is a well-known isomorphism (see \cite[Proposition 1.14]{Bird} for a proof)

\begin{equation} \label{eq:isoBird114}
Hom_{\ell}(\A,\Cat)(W,G) \mr{\cong} Hom_{\ell}(\cc{E}l_W,\Cat)(\triangle 1,G\Diamond_W)
\end{equation}

As noted in \cite{Bird}, this isomorphism implies immediately that any weighted lax limit can be expressed as a conical lax limit, but it also has the following interesting corollaries when restricting to ``stricter" limits. 
In \cite[Theorem 15]{S} Street uses this to show that any strict weighted limit can be expressed a 
conical $\sigma$-$\omega$-limit with $\Omega = \Omega_s$ (then in particular as a Gray's quasi-limit). In \cite[\S2.3]{DDS1} this procedure is slightly modified in order to prove that weighted $\sigma$-limits 
(given a family $\Sigma$ of arrows of $\A$)
can be expressed as conical $\sigma$-limits. 

The notion of $\sigma$-$\omega$-limit allows to easily show that these three results are instances of the same following proposition  
(considering respectively the items $1a,3,2$ of Example \ref{ex:varioslimits}), stating that any weighted $\sigma$-$\omega$-limit can be expressed a 
conical $\sigma$-$\omega$-limit with respect to the same family of $2$-cells $\Omega$.


The indexing category for the conical limit is, as in the particular cases described above, the $2$-category of elements $\cc{E}l_W$ of the weight. 
The diagram for the conical limit is given by the composition of the original $2$-functor $F$ with the projection $\cc{E}l_W \mr{\Diamond_W} \A$.  
The distinguished arrows of $\cc{E}l_W$ are those of the form $(f,id)$, with $f \in \Sigma$. We denote this family of arrows by $id_\Sigma$. We note that in \cite{DDS1} the family consisting of the arrows of the form $(f,\varphi)$, with $f \in \Sigma$ and $\varphi$ an isomorphism is considered instead; below the proof we explain why this family works for the $\sigma$-case (that is with $\Omega = \Omega_p$) but not for this general case.


\begin{\prop} \label{prop:weightedcomoconical}
Let $\A \mr{W} \Cat$, $\A \mr{F} \B$.
 The weighted $\sigma$-$\omega$-limit $\{W,F\}_{\sigma,\omega}$ is equal to the conical $\sigma$-$\omega$-limit of the $2$-functor $\cc{E}l_W \mr{\Diamond_W} \A \mr{F} \Cat$ (with respect to $id_{\Sigma}$ and $\Omega$), in the sense that the universal properties defining each limit are equivalent.
\end{\prop}

\begin{proof}
For any other $\A \mr{G} \Cat$, we recall that the isomorphism \eqref{eq:isoBird114} is given at the level of objects by the formulas, for $\eta$ on the left side and $\theta$ on the right side, $\eta_A(x) = \theta_{(x,A)}$, \mbox{$(\eta_f)_x = \theta_{(f,id)}$,} $\theta_{(f,\varphi)} = \eta_B(\varphi) (\eta_f)_x$. 
In particular, for $G = \B(E,F-)$ we consider the isomorphism between the categories of lax cones
 
\begin{equation} \label{eq:isoBird} 
 Cones_{\ell}^W(E,F) \cong Cones_{\ell}(E,F\Diamond_W).
 \end{equation}
 
 By the formula $(\eta_f)_x = \theta_{(f,id)}$ above, each $(\eta_f)_x$ is in $\Omega$ if and only if each $\theta_{(f,id)}$ is, therefore 
 if we consider the family $id_{\Sigma}$ of $\cc{E}l_W$, the isomorphism above restricts to
 
 $$Cones_{\sigma,\omega}^W(E,F) \cong Cones_{\sigma,\omega}(E,F\Diamond_W),$$
 
 thus finishing the proof.  
\end{proof}

We note that, if $f \in \Sigma$ and $\varphi$ is an isomorphism, from the formula $\theta_{(f,\varphi)} = \eta_B(\varphi) (\eta_f)_x$ it follows that $\theta_{(f,\varphi)} \in \Omega_p$ (in other words that it is invertible) if $(\eta_f)_x$ is, but this may not be the case for an arbitrary $\Omega$ (which may not contain $\Omega_p$).

\begin{remark} \label{rem:correspconos}
In the hypothesis of Proposition \ref{prop:weightedcomoconical}, the correspondence between the limit cones is given by the formulas present at the beginning of the proof of the proposition. If we denote these cones by $\xi$ and $\pi$ as in Definitions \ref{de:conicalsolimit} and \ref{de:NE}, we have in particular the formula $\xi_A(x) = \pi_{(x,A)}$ for each $A \in \A$, $x \in WA$, showing that the projections of each limit correspond.
\end{remark}

\begin{proposition} \label{prop:weightedcomoconicalcompatible}
In the hypothesis of Proposition \ref{prop:weightedcomoconical}, let $\Omega'$ be another family of $2$-cells of $\B$. Then $\{W,F\}_{\sigma,\omega}$ is $\Omega'$-compatible if and only if its conical expression $\soLim{}{F\Diamond_W}$ is so.
\end{proposition}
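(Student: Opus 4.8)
The plan is to promote the isomorphism of categories of cones established in the proof of Proposition~\ref{prop:weightedcomoconical} to an isomorphism that moreover respects the $\Omega'$-decoration on morphisms of cones, and then to read off the equivalence of the two compatibility conditions of Definitions~\ref{def:conicalcompatible} and~\ref{def:compatible}.

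First I would observe that the isomorphism \eqref{eq:isoBird114}, being an isomorphism of categories, acts on modifications and not just on lax cones. Unwinding the object-level formulas recalled at the beginning of the proof of Proposition~\ref{prop:weightedcomoconical}, a morphism of $\sigma$-$\omega$-cones $\rho$ on the weighted side --- given by the family of $2$-cells $E \cellrd{\theta_A(x)}{\rho_A(x)}{\theta'_A(x)} FA$ of Definition~\ref{def:compatible}, one for each $A\in\A$ and $x\in WA$ --- corresponds to the morphism of $\sigma$-$\omega$-cones on the conical side whose component at $(x,A)\in\cc{E}l_W$ is the very same $2$-cell $\rho_A(x)$ (its source and target matching the identification $\xi_A(x)=\pi_{(x,A)}$ of Remark~\ref{rem:correspconos}). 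Hence the condition ``$\rho_A(x)\in\Omega'$ for all $A,x$'' on the weighted side is literally the same condition as ``the component at $(x,A)$ lies in $\Omega'$ for all $(x,A)\in\cc{E}l_W$'' on the conical side, so the isomorphism $Cones_{\sigma,\omega}^W(E,F)\cong Cones_{\sigma,\omega}(E,F\Diamond_W)$ of Proposition~\ref{prop:weightedcomoconical} restricts to an isomorphism
$$Cones_{\sigma,\omega}^W(E,F)^{\Omega'} \;\cong\; Cones_{\sigma,\omega}(E,F\Diamond_W)^{\Omega'}.$$

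Next I would use that the limit object $E=\{W,F\}_{\sigma,\omega}=\soLim{}{F\Diamond_W}$ is one and the same, so $\B^{\Omega'}(B,E)$ is the same category in Definitions~\ref{def:conicalcompatible} and~\ref{def:compatible}; and since \eqref{eq:isoBird} is natural in the vertex and (by Remark~\ref{rem:correspconos}) carries the weighted universal cone $\xi$ to the conical one $\pi$, for every $B$ the functor $\pi_*$ of \eqref{eq:ssssconical} factors as $\xi^*$ (that is, $\mu_B$ of \eqref{eq:ssss}) followed by the cone isomorphism above, and this factorization restricts to the $\Omega'$-decorated subcategories by the previous paragraph. Therefore $\xi^*$ restricts to an isomorphism $\B^{\Omega'}(B,E)\mr{} Cones_{\sigma,\omega}^W(B,F)^{\Omega'}$ precisely when $\pi_*$ restricts to an isomorphism $\B^{\Omega'}(B,E)\mr{} Cones_{\sigma,\omega}(B,F\Diamond_W)^{\Omega'}$; letting $B$ range over $\B$, this is exactly the statement that $\{W,F\}_{\sigma,\omega}$ is $\Omega'$-compatible if and only if $\soLim{}{F\Diamond_W}$ is. The only point needing any care --- and it is a mild one --- is the first step, namely confirming that \eqref{eq:isoBird114} really does act on modifications by the naive reindexing $\rho_A(x)\leftrightarrow\rho_{(x,A)}$; once that bookkeeping is in place, the proposition follows formally from Proposition~\ref{prop:weightedcomoconical} and Remark~\ref{rem:correspconos}, just as the weighted-versus-conical comparison itself did.
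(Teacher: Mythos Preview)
Your proposal is correct and follows essentially the same approach as the paper: the key observation in both is that the isomorphism \eqref{eq:isoBird114} acts on modifications by the reindexing $\rho_A(x)\leftrightarrow\rho_{(x,A)}$ (written $\alpha_A(x)=\beta_{(x,A)}$ in the paper), so it restricts to the $\Omega'$-decorated subcategories, from which the equivalence of compatibility conditions follows. The paper's proof simply records this formula and concludes in one line, whereas you spell out the commuting triangle between $\xi^*$, $\pi_*$, and the cone isomorphism more explicitly.
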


\begin{proof}
Recall that the isomorphism \eqref{eq:isoBird114} is given at the level of arrows by the formula, for $\eta \mr{\alpha} \eta'$ on the left side and $\theta \mr{\beta} \theta'$ on the right side, $\alpha_A(x) = \beta_{(x,A)}$ (for each $A \in \A$, $x \in WA$). Then, by Definitions \ref{def:conicalcompatible} and \ref{def:compatible}, the last isomorphism of the proof of Proposition \ref{prop:weightedcomoconical} restricts to
 $$Cones_{\sigma,\omega}^W(E,F)^{\Omega'} \cong Cones_{\sigma,\omega}(E,F\Diamond_W)^{\Omega'},$$
 yielding the desired result.
\end{proof} 
 
\begin{example} \label{ex:inserter2}
We examine the result of Proposition \ref{prop:weightedcomoconical} when applied to the inserter (see Example \ref{ex:inserter}), as this will be relevant for our results (see Example \ref{ex:inserterfinal}). 

In this case the indexing category $\cc{E}l_W$ of the conical $\sigma$-$\omega$-limit has three objects $(*,a)$, $(0,b)$, $(1,b)$ and three arrows 
from which all arrows can be obtained as identities and compositions, namely $(u,id)$, $(v,id)$, $(id,\varphi)$ which are mapped respectively to $f$, $g$, $id$.

This ``conical inserter'' is different to the following 
one 
that can be found in \cite[I,7.10 2)]{Gray}: there Gray considers the diagram $\{a \mrpair{u}{v} b\} \mapsto \{B \mrpair{f}{g} C\}$, $\Sigma = \{v,id_a,id_b\}$ and $\Omega = \Omega_s$.

In particular, we note that in the first case there are arrows of the family $id_\Sigma$ that are mapped to both $f$ and $g$, but in the second one no arrow of $\Sigma$ is mapped to $f$.
\end{example}

\section{Strict limits in the $2$-categories of weak morphisms} \label{sec:weightedlimitsofalgebras}
 
In this section we generalize the first limit lifting results of \cite[\S 2]{K2dim} to the $2$-categories of weak $T$-algebra morphisms, showing that these have products, inserters and equalizers when the base $2$-category does (under some extra conditions, which are trivially satisfied in the cases for which these results were previously known). Their case is recovered setting $\Omega = \Omega_p$. While these results are used in the rest of \cite[\S 2]{K2dim} to deduce the lifting of lax and pseudolimits (using the construction of lax and pseudolimits from those three types of limits), we decided not to attempt a generalization of this deduction, since we will give in Section \ref{sec:weaklimitsofalgebras} a different, direct proof of a theorem regarding the lifting of $\sigma$-$\omega$-limits, therefore in particular of lax and pseudolimits (see Remark \ref{rem:casosk2dimysigmalim}).

Though we don't use them for our main result of Section \ref{sec:weaklimitsofalgebras}, the results of this section have their own relevance. First, setting $\Omega = \Omega_\ell$ we have as a particular case of our Propositions \ref{prop:inserters} and \ref{prop:equifiers} the results 
\cite[Prop. 4.3, 4.4]{Llax}, in the strengthened form of \cite[\S6]{Llax}. We note that the proofs of these results, as they appear in \cite{Llax}, are substantially different from the ones of \mbox{\cite[Prop. 2.2,2.3]{K2dim}}, thus also to the ones here. Also, though our lifting theorem of Section \ref{sec:weaklimitsofalgebras} can also be applied to products, inserters and equifiers, when doing so we obtain in some cases a result which requires stronger hypotheses than the ones in this section, we examine this with detail for the inserter in Example \ref{ex:inserterfinal}.
 
%
%
\begin{definition} \label{de:preserve}
 Let $\Omega$, $\Omega'$ be any two families of $2$-cells of $\cc{K}$. We say that a family of morphisms $L \mr{p_i} A_i$ in $T$-$Alg_{\omega}^{\Omega}$ (jointly) detects $\Omega'$-ness if, for any other morphism $Z \mr{z} L$ in $T$-$Alg_{\omega}^{\Omega}$, if all the compositions $p_i z$ are $\omega$-morphisms with respect to $\Omega'$, then so is $z$. 
 
 If $\Omega' = \Omega_s$, we say "detect strictness". If 
$\Omega' = \Omega_p$, we say "detect pseudoness".\end{definition}

  \begin{\prop} \label{prop:products}
  The forgetful $2$-functor $T$-$Alg_{\omega}^{\Omega} \mr{U_{\omega}^{\Omega}} \cc{K}$ creates products of $T$-algebras for which their product in $\K$ is $\Omega$-compatible. The projections of the product are strict, and jointly detect $\Omega'$-ness for any family $\Omega'$ such that the product in $\K$ is also $\Omega'$-compatible. 
   \end{\prop}

\begin{proof}
  Our proof follows the same lines of \cite[Prop. 2.1]{K2dim}. Since we also use the same notation we decided to omit some calculations that can be explicitly found in op.cit. 
 If $A = \prod_{i \in I} A_i$ is an $\Omega$-compatible product in $\K$ of a family of $T$-algebras, we have $TA \mr{a} A$ the unique map with $p_i a = a_i \cdot Tp$. Then, as in \mbox{\cite[Prop. 2.1]{K2dim},} $(A,a) \in T$-$Alg_\omega^{\Omega}$ and $p$ is strict.
  
  Now, to show that $(p_i,id)$ is a product in $T$-$Alg_\omega^\Omega$, let $D \xr{(q_i,\overline{q_i})} A_i$ be $\omega$-morphisms. By the one-dimensional universal property in $\cc{K}$, there is a unique $D \mr{h} A$ with $p_i h = q_i$. Now, since the product is $\Omega$-compatible (see Remark \ref{rem:significadocompatible}), given the $2$-cells $p_i a . Th \Mr{\overline{q_i}} p_i hd$ there
is a unique 2-cell $a. Th \Mr{\overline{h}} hd$ in $\Omega$ with $p_i \overline{h} = \overline{q_i}$. 
   Note that, if the product is $\Omega'$-compatible, $\overline{h}$ is in $\Omega'$ if all the $\overline{q_i}$ are, giving the last assertion of the proposition. The rest of the proof (that is, the verification of the algebra axioms for $(h,\overline{h})$ and the two-dimensional universal property) is exactly as in \cite{K2dim}. 
\end{proof}

 \begin{\prop} \label{prop:inserters}
  The forgetful $2$-functor $T$-$Alg_{\omega}^{\Omega} \mr{U_{\omega}^{\Omega}} \cc{K}$ creates inserters of pairs $(f,\overline{f}), (g,\overline{g})$ for which $\overline{f}$ is invertible and the inserter of $(f,g)$ in $\K$ is $\Omega$-compatible. The projection of the inserter is strict, and detects $\Omega'$-ness for any family $\Omega'$ such that the inserter of $(f,g)$ in $\K$ is also $\Omega'$-compatible. If $\overline{g}$ is also invertible, we may replace in the proposition inserter by iso-inserter.
 \end{\prop}

 \begin{proof}
The same remark that we made at the beginning of the proof of Proposition \ref{prop:products} applies here, with respect to \cite[Prop. 2.2]{K2dim}. 
  Given the inserter in $\cc{K}$, $(A,(p,\lambda))$, we have the cone $(q,\mu) := (b \cdot Tp, (\overline{g} \cdot Tp)(c \cdot T\lambda)(\overline{f}^{-1}\cdot Tp))$ and therefore there exists a unique $TA \mr{a} A$ such that $pa = q$ and $\lambda a = \mu$. From here it follows, as in \mbox{\cite[Prop. 2.2]{K2dim},} that $(A,a) \in T$-$Alg_{\omega}^{\Omega}$, that $p$ is strict and that $\lambda$ is an algebra $2$-cell, so that $(p,\lambda)$ is a cone in $T$-$Alg_{\omega}^{\Omega}$.
  
  Now, to prove the universal property, consider a cone $(q,\mu)$ in $T$-$Alg_{\omega}^{\Omega}$, with vertex $D$. Using the universal property in $\K$, we have a unique $D \mr{h} A$ such that $ph=q$, $\lambda h = \mu$. Now, a key observation for the proof (see \cite[Prop. 2.2]{K2dim} for the computations) is that the equation \eqref{eq:alg2cell} that expresses that $\mu$ is an algebra $2$-cell is equivalent to the equation \mbox{$(\lambda h d)(f \overline{q}) = (g \overline{q})(\lambda a Th)$,} which expresses that $\overline{q}$ is a morphism between the cones \mbox{$(p \cdot a \cdot Th,\lambda a Th)$} and $(p \cdot h \cdot d, \lambda h d)$, see Example \ref{ex:inserter}. Then, since $\overline{q}$ is in $\Omega$, and the inserter is $\Omega$-compatible, there is a unique $2$-cell $a \cdot Th \Mr{\overline{h}} hd$ in $\Omega$ such that $p \overline{h} = \overline{q}$. Note that, if the inserter is $\Omega'$-compatible, $\overline{h}$ is in $\Omega'$ if $\overline{q}$ is, giving the last assertion of the 
proposition. The 
rest of the proof is exactly as in \cite{K2dim}.
 \end{proof}

 \begin{\prop} \label{prop:equifiers}
  The forgetful $2$-functor $T$-$Alg_{\omega}^{\Omega} \mr{U_{\omega}^{\Omega}} \cc{K}$ creates equifiers of $2$-cells \mbox{$\alpha,\beta: (f,\overline{f}) \Mr{} (g,\overline{g})$} for which $\overline{f}$ is invertible and the equifier of $(\alpha,\beta)$ in $\K$ is $\Omega$-compatible. The projection of the equifier is strict, and detects $\Omega'$-ness for any family $\Omega'$ such that the equifier of $(\alpha,\beta)$ in $\K$ is also $\Omega'$-compatible. 
   \end{\prop}

 \begin{proof}
 The same remark of the proofs of Propositions \ref{prop:products}, \ref{prop:inserters} applies here, with respect to \cite[Prop. 2.3]{K2dim}. 
  Given the equifier $p$ in $\cc{K}$, since $\overline{f}$ is invertible then  
  $b \cdot Tp$ equifies $\alpha$ and $\beta$ and therefore   
  we have a unique $TA \mr{a} A$ with $pa = b \cdot Tp$. Then, as in \cite[Prop. 2.3]{K2dim}, $(A,a) \in T$-$Alg_\omega^{\Omega}$ and $p$ is strict.
  
  Now, to show that $(p,id)$ is the equifier in $T$-$Alg_\Omega$, let $(q,\overline{q})$ equify $\alpha$ and $\beta$ with $\overline{q} \in \Omega$. By the one-dimensional universal property in $\cc{K}$, there is a unique $D \mr{h} A$ with $ph = q$. Now, since the equifier is $\Omega$-compatible (see Example \ref{ex:equifier}), given the $2$-cell $pa . Th \Mr{\overline{q}} phd$ there
is a unique 2-cell $a. Th \Mr{\overline{h}} hd$ in $\Omega$ with $p\overline{h} = \overline{q}$. 
   Note that, if the equifier is $\Omega'$-compatible, $\overline{h}$ is in $\Omega'$ if $\overline{q}$ is, giving the last assertion of the proposition. The rest of the proof is exactly as in \cite{K2dim}. 
 \end{proof}

\section{Weak limits in the $2$-categories of weak morphisms} \label{sec:weaklimitsofalgebras}
 
 We now give what we consider our main result of this paper, a lifting theorem for $2$-categories of $\omega$-$T$-morphisms and conical $\sigma$-$\omega op$-limits (that is, Gray's cartesian op-quasi-limits, see Remark \ref{rem:oplimits}). Various interesting limit lifting results follow as corollaries.
 
 \begin{theorem} \label{teo:main}
 Let $\Sigma$ be a family of arrows of $\A$, and let $\Omega$, $\Omega'$ $\Omega''$ be families of $2$-cells of $\K$. 
We consider the family $\overline{\Omega}$ consisting of those $2$-cells $\alpha$ of $T$-$Alg_{\omega}^{\Omega'}$ such that $\alpha \in \Omega$. 
 Assume $T(\Omega) \subseteq \Omega$.
  Consider a $2$-functor $\A \mr{\overline{F}} T$-$Alg_{\omega}^{\Omega'}$, denote $F = U_{\omega}^{\Omega'} \overline{F}$. 
  If $\overline{F(f)}$ is in $\Omega$ for each $f \in \Sigma$, and $\soopLim{A \in \A}{FA}$ (with respect to $\Sigma$, $\Omega$) exists in $\K$ and is $\Omega'$-compatible, then $\soopLim{A \in \A}{\overline{F}A}$ (with respect to $\Sigma$, $\overline{\Omega}$) exists in $T$-$Alg_{\omega}^{\Omega'}$ and is preserved by $U_{\omega}^{\Omega'}$. 
 We may say in this case, slightly abusing language, that the forgetful $2$-functor $U_{\omega}^{\Omega'}$ creates this type of $\sigma$-$\omega op$-limits. 
  The projections $\pi_A$ of this limit are strict, and jointly detect $\Omega''$-ness for any family $\Omega''$ such that $\soopLim{A \in \A}{FA}$ is also $\Omega''$-compatible.
 \end{theorem}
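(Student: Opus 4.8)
The plan is to equip the object $L := \soopLim{A \in \A}{FA}$ of $\K$ with a canonical $T$-algebra structure $a$ making every projection $\pi_A$ a strict algebra morphism, to record that the $\pi_A$ together with the structural $2$-cells $\pi_f$ then form a $\sigma$-$\omega op$-cone in $T$-$Alg_{\omega}^{\Omega'}$ with respect to $\Sigma$ and $\overline{\Omega}$, and to deduce its universal property from that of $L$ in $\K$, using $\Omega'$-compatibility to keep the structural $2$-cells that arise inside $\Omega'$. Write $\{L \mr{\pi_A} FA\}$, $\{\pi_B \Mr{\pi_f} \pi_A\cdot Ff\}$ for the universal $\sigma$-$\omega op$-cone (so $\pi_f \in \Omega$ for $f \in \Sigma$), and $TFA \mr{b_A} FA$, $b_B\cdot T(Ff) \Mr{\overline{F(f)}} Ff\cdot b_A$ for the algebra data carried by $\overline{F}$. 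The legs $\{TL \mr{b_A\cdot T\pi_A} FA\}$ together with the $2$-cells
\begin{equation}\label{eq:diagconop}
\theta_f := \bigl(\overline{F(f)}\cdot T\pi_A\bigr)\circ\bigl(b_B\cdot T\pi_f\bigr) \; : \; b_B\cdot T\pi_B \Longrightarrow Ff\cdot b_A\cdot T\pi_A
\end{equation}
form a $\sigma$-$\omega op$-cone for $F$ with vertex $TL$: the lax cone axioms follow from those for $\pi$ together with the coherence axioms of the algebra morphisms $F(f)$, and $\theta_f \in \Omega$ for $f \in \Sigma$ because $\pi_f \in \Omega$ (hence $T\pi_f \in T(\Omega)\subseteq\Omega$) and $\overline{F(f)}\in\Omega$, using that $\Omega$ is closed under vertical and horizontal composition. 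The universal property of $L$ then produces a unique $TL \mr{a} L$ with $\pi_A\cdot a = b_A\cdot T\pi_A$ and $\pi_f\cdot a = \theta_f$, and the $T$-algebra axioms for $(L,a)$ follow by whiskering both equations with the $\pi_A$ and invoking uniqueness. Observe that $\theta_f$ as defined in \eqref{eq:diagconop} only typechecks because $\pi_f$ runs $\pi_B \Rightarrow \pi_A\cdot Ff$, i.e.\ in the ``op'' direction; with the plain lax convention one would be forced to use the inverse of $\overline{F(f)}$, which is precisely why, for a general family $\Omega$, the theorem concerns $\sigma$-$\omega op$-limits.

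I would then check that $\{(L,a) \mr{(\pi_A, id)} FA\}$ with the $2$-cells $\pi_f$ is a $\sigma$-$\omega op$-cone in $T$-$Alg_{\omega}^{\Omega'}$ with respect to $\Sigma$ and $\overline{\Omega}$, and that it is universal. Strictness of $\pi_A$ is the identity $\pi_A\cdot a = b_A\cdot T\pi_A$, so $\pi_A$ is an $\omega$-morphism with structural $2$-cell $id\in\Omega'$; the composite structural $2$-cell of $\pi_A\cdot Ff$ is $\overline{F(f)}\cdot T\pi_A$, so the equation \eqref{eq:alg2cell} asserting that $\pi_f$ is an algebra $2$-cell is exactly $\theta_f = \pi_f\cdot a$, which holds by the definition of $a$; hence $\pi_f$ is a $2$-cell of $T$-$Alg_{\omega}^{\Omega'}$ lying in $\Omega$, i.e.\ in $\overline{\Omega}$, for $f\in\Sigma$. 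For the one-dimensional universal property, given a $\sigma$-$\omega op$-cone $\{(Z,z) \xr{(q_A,\overline{q_A})} FA\}$, $\{q_f\}$ over $\overline{F}$ with vertex a $T$-algebra $(Z,z)$, apply $U_{\omega}^{\Omega'}$ to get such a cone over $F$ in $\K$; the universal property of $L$ gives a unique $Z \mr{h} L$ with $\pi_A\cdot h = q_A$ and $\pi_f\cdot h = q_f$. The family $\{\overline{q_A}\}$ is then a morphism of the $\sigma$-$\omega op$-cones $\pi\circ(a\cdot Th)$ and $\pi\circ(h\cdot z)$ over $F$ (this is the computational core, obtained from \eqref{eq:alg2cell} for each $q_f$ combined with $\pi_f\cdot a=\theta_f$ and the lax cone axioms), so the universal property of $L$ produces a unique $2$-cell $a\cdot Th \Mr{\overline{h}} h\cdot z$ with $\pi_A\cdot\overline{h} = \overline{q_A}$; since every $\overline{q_A}$ lies in $\Omega'$ and $L$ is $\Omega'$-compatible, $\overline{h}\in\Omega'$, and the $\omega$-morphism coherence axioms for $(h,\overline{h})$ follow by the usual whiskering-plus-uniqueness argument. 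Post-composing $(\pi_A,id)$ with $(h,\overline{h})$ recovers the given cone, and $(h,\overline{h})$ is the unique such lift, by uniqueness in $\K$ and faithfulness of $U_{\omega}^{\Omega'}$ on $2$-cells.

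For the two-dimensional universal property, given $(h,\overline{h}),(k,\overline{k}):(Z,z)\to(L,a)$ and a morphism of cones $\{\rho_A\}$ in $T$-$Alg_{\omega}^{\Omega'}$ between the cones obtained by post-composition, the two-dimensional universal property of $L$ in $\K$ gives a unique $2$-cell $h\Mr{\gamma}k$ with $\pi_A\cdot\gamma=\rho_A$, and whiskering \eqref{eq:alg2cell} with the $\pi_A$ (using $\pi_A\cdot a = b_A\cdot T\pi_A$) shows that $\gamma$ satisfies \eqref{eq:alg2cell}, so it is a $2$-cell of $T$-$Alg_{\omega}^{\Omega'}$; uniqueness is inherited from $\K$. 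This shows that $(L,a)$ with this cone is $\soopLim{A \in \A}{\overline{F}A}$, and since $U_{\omega}^{\Omega'}$ carries it to the cone $\{\pi_A\}$, $\{\pi_f\}$, this limit is preserved by $U_{\omega}^{\Omega'}$. For the final assertion, if $(Z',z') \xr{(z,\overline{z})} (L,a)$ is such that each composite $\pi_A\cdot(z,\overline{z})$ is an $\omega$-morphism with respect to $\Omega''$, then, $\pi_A$ being strict, the structural $2$-cell of that composite is $\pi_A\cdot\overline{z}$, which is therefore in $\Omega''$; but $\overline{z}$ corresponds, under the isomorphism of the universal property of $L$ in $\K$, to a morphism of cones all of whose components $\pi_A\cdot\overline{z}$ lie in $\Omega''$, so $\Omega''$-compatibility of $L$ forces $\overline{z}\in\Omega''$ (see Remark \ref{rem:significadocompatible}), i.e.\ $(z,\overline{z})$ is an $\omega$-morphism with respect to $\Omega''$; hence the $\pi_A$ jointly detect $\Omega''$-ness.

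The step I expect to be the main obstacle is the one flagged above as the ``computational core'': verifying that $\{\overline{q_A}\}$ is genuinely a morphism of the $\sigma$-$\omega op$-cones $\pi\circ(a\cdot Th) \Rightarrow \pi\circ(h\cdot z)$. This amounts to combining, for each $f$, the equation \eqref{eq:alg2cell} satisfied by $q_f$ with the identity $\pi_f\cdot a = \theta_f$ defining $a$ and with the lax cone coherence, and it is here that the hypothesis $T(\Omega)\subseteq\Omega$ and the interchange law for $2$-cells are actually used; by contrast the $T$-algebra axioms for $(L,a)$, the $\omega$-morphism coherence for $(h,\overline{h})$, and the whole two-dimensional part reduce to whiskering with the $\pi_A$ and appealing to the uniqueness clauses in the universal properties of $L$ in $\K$.
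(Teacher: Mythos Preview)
Your plan is correct and follows essentially the same route as the paper's proof: build a $\sigma$-$\omega op$-cone on $TL$ from $b_A\cdot T\pi_A$ and the composite in \eqref{eq:diagconop}, induce the algebra structure $a$ on $L$ by the one-dimensional universal property, then verify both universal properties in $T$-$Alg_{\omega}^{\Omega'}$ using $\Omega'$-compatibility exactly as you describe, with the key computational step being that the algebra-$2$-cell axiom for each $q_f$ is equivalent to $\{\overline{q_A}\}$ being a modification between the two induced cones. Two tiny slips worth fixing: the composite should be written $Ff\cdot\pi_A$ rather than $\pi_A\cdot Ff$, and the hypothesis $T(\Omega)\subseteq\Omega$ is already fully discharged when you show $\theta_f\in\Omega$ (it is not needed again in the ``computational core'').
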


\begin{proof}
 Denote $L = \soopLim{A \in \A}{FA}$. We define a $\sigma$-$\omega$-cone $\theta = (\theta_A, \theta_f)$ with vertex $TL$, where $\theta_A = a T(\pi_A)$ and  $\theta_f$ is given by the composition
 
 \begin{equation} \label{eq:diagconop}
 \xymatrix{& TL \ar[rd]^{T \pi_B} \ar[dl]_{T \pi_A} \ar@{}[d]|{\stackrel{\Leftarrow}{T\pi_f}} \\
 TFA \ar[rr]_{TFf} \ar[d]_a \ar@{}[drr]|{\Downarrow \; \overline{Ff}} && TFB \ar[d]^b \\
				  A \ar[rr]_{Ff} && B}
\end{equation}

Note that for $f \in \Sigma$, $\pi_f \in \Omega$ then by hypothesis so is $T\pi_f$, and $\overline{Ff} \in \Omega$ also by hypothesis, therefore $\theta_f \in \Omega$ as desired. From the one-dimensional universal property of the limit $\soopLim{A \in \A}{FA}$, we have a unique $TL \mr{l} L$ such that $\pi_A l = \theta_A$ and $\pi_f l = \theta_f$ for every $A,f$. The associativity and unit axioms for $l$, expressing that $(L,l)$ is a $T$-algebra, follow from those of the $FA$ using that the projections $\pi_A$ are jointly monic and the naturality of the unit and the multiplication of $T$. Then the $\pi_A$ are strict morphisms as desired, and the equality $\pi_f l = \theta_f$ is exactly the equation that expresses that the $\pi_f$ are algebra $2$-cells, thus we have a $\sigma$-$\omega$-cone in $T$-$Alg_{\omega}^{\Omega'}$.

To show the one-dimensional universal property in $T$-$Alg_{\omega}^{\Omega'}$, consider another $\sigma$-$\omega$-cone $E \xr{\alg{h_A}} FA$, $\alg{hb} \Mr{h_f} \alg{Ff} \alg{ha}$. We will show that there is a unique $E \xr{\alg{h}} L$ such that $\alg{h_A} = (\pi_A,id) \alg{h}$ for each $A$, that is $\pi_A h = h_A$, $\pi_f h = h_f$ and $\pi_A \overline{h} = \overline{h_A}$. By the universal property in $\K$, there exists a unique $h$ satisfying the first two of these equalities. Now, the equations that express the fact that each $h_f$ is an algebra $2$-cell, namely

$$\vcenter{\xymatrix{ && \\
TE \ar@/^2.7pc/[rr]^{Th_B} \ar[r]^{Th_A} \ar[d]_e & \ar@{}[u]|<<<<{\Downarrow \: Th_f}    TFA \ar[r]^{TFf} \ar[d]_a & TFB \ar[d]^b \\
E \ar@{}[ru]|{\Downarrow \; \overline{h_A}} \ar[r]_{h_A} & FA \ar@{}[ru]|{\Downarrow \; \overline{Ff}} \ar[r]_{Ff} & FB }}
=
\vcenter{\xymatrix{ TE \ar[rr]^{Th_B} \ar@{}[rrd]|{\Downarrow \; \overline{h_B}} \ar[d]_e && TFB \ar[d]^b  \\
E \ar@/_2ex/[rd]_{h_A} \ar[rr]^{h_B} && FB \\
& FA \ar@{}[u]|{\Downarrow \; h_f} \ar@/_2ex/[ru]_{Ff} }}$$

are equivalent to the axiom expressing that $h_* e \mr{\overline{h_A}} \theta_* Th$ is a morphism of $\sigma$-$\omega$-cones, that is a modification. Note that $h_* e = \pi_* h e$, $\theta_* Th = \pi_* l Th$, then by our $\Omega'$-compatibility hypothesis we have a unique $he \Mr{\overline{h}} l Th$ in $\Omega'$ satisfying $\pi_A \overline{h} = \overline{h_A}$ as desired. 
Note that, if the limit is $\Omega''$-compatible, $\overline{h}$ is in $\Omega''$ if each $\overline{h_A}$ is, giving the last assertion of the theorem. 
The coherence conditions with respect to $m,i$ for $\overline{h}$ follow from those of the $\overline{h_A}$ using that, by the two-dimensional universal property of the limit in $\K$, if a pair of $2$-cells is equal after composing with all the $\pi_A$, then they are equal.

Now, for the $2$-dimensional universal property in $T$-$Alg_{\omega}^{\Omega'}$, consider the following situation:

$$\xymatrix{ E \ar@<4ex>[rrd]^{\alg{h_A}} \ar@<0ex>[rrd]_{\alg{g_A}} \ar@{}@<2ex>[rrd]|{\Downarrow \; \beta_A} 
	       \ar@<-0ex>[dd]^{\alg{g}} \ar@<-3ex>[dd]_{\alg{h}} \ar@{}@<-1.5ex>[dd]|{\Rightarrow}  \ar@{}@<-1.5ex>[dd]|<<<<<<<<<{\alpha} \\
	       && FA \\
	       L \ar[rru]_{\pi_A} } \quad\quad\quad
\xymatrix{ E \ar@<4ex>[rd]^{h_A} \ar@<0ex>[rd]_{g_A} \ar@{}@<2ex>[rd]|{\Downarrow \; \beta_A} 
	       \ar@<-0ex>[dd]^{g} \ar@<-3ex>[dd]_{h} \ar@{}@<-1.5ex>[dd]|{\Rightarrow}  \ar@{}@<-1.5ex>[dd]|<<<<<<<<<{\alpha} \\
	       & FA \\
	       L \ar[ru]_{\pi_A} }$$

That is, given a morphism $\beta_A$ between the cones $h,g$ in $T$-$Alg_{\omega}^{\Omega'}$, we have to show that there is a unique algebra $2$-cell $\alpha$ as above. Now, by the $2$-dimensional universal property in $\K$ (use the diagram in the right), there is a unique $2$-cell $\alpha$ in $\K$. It suffices to check that if the $\beta_A$ are algebra $2$-cells then so is $\alpha$. To do this it suffices to compose the two diagrams in \eqref{eq:alg2cell} for $\alpha$ with each $\pi_A$, in order to have the corresponding diagrams for $\beta_A$ using the equalities $\pi_A \overline{g} = \overline{g_A}$, $\pi_A \overline{h} = \overline{h_A}$ and $\pi_A \alpha = \beta_A$.
\end{proof}

\begin{example} \label{ex:inserterfinal}
Let $\alg{f}, \alg{g}$ be a pair of morphisms in $T$-$Alg_\omega^\Omega$, and consider another family $\Omega'$ as in Proposition \ref{prop:inserters}. As it is done in Example \ref{ex:inserter2}, we may write the inserter of $\alg{f}, \alg{g}$ as the $\sigma$-$\omega op$-limit of the diagram $\{a \mrpair{u}{v} b\} \mapsto \{B \mrpair{f}{g} C\}$, with respect to the families $\Sigma = \{u,id_a,id_b\}$ and $\Omega_s$. Then, an application of Theorem \ref{teo:main} to this limit (with $\Omega = \Omega_s$, $\Omega' = \Omega$ and $\Omega'' = \Omega'$) yields the result of Proposition \ref{prop:inserters}, but with the extra hypothesis that $\overline{f}$ is now required to be the identity (this still has \cite[Prop. 4.4]{Llax} as a particular case, but not \cite[Prop 2.2]{K2dim}).

We note that if the inserter is written as a conical $\sigma$-$\omega op$-limit using Proposition \ref{prop:weightedcomoconical} (see again Example \ref{ex:inserter2}), then the obtained result is even weaker, since in this case both $\overline{f}$ and $\overline{g}$ are now required to be identities, in other words only inserters in $T$-$Alg_s$ are obtained in this way. We don't know if a stronger result could be deduced from Theorem \ref{teo:main} using another expression of the inserter as a conical $\sigma$-$\omega op$-limit. 
\end{example}

\begin{remark} 
The reason why \cite[Prop. 2.2]{K2dim} doesn't follow from Theorem \ref{teo:main} using Proposition \ref{prop:weightedcomoconical} is that we are trying to lift a $\sigma$-$\omega$-limit which is taken with respect to $\Omega_s$ to a $2$-category of $\omega$-$T$-morphisms with respect to $\Omega_p$, but $\Omega_s$ is not included in $\Omega_p$, therefore our hypothesis that 
$\overline{F(f)}$ is in $\Omega_s$ for each $f \in \Sigma$ won't necessarily hold.
This obstruction vanishes in the case of Theorem \ref{teo:main} in which $\Omega' \subseteq \Omega$, 
and this allows the use of Proposition \ref{prop:weightedcomoconical} to deduce the lifting of weighted $\sigma$-$\omega$-limits:
\end{remark}

\begin{corollary} \label{coro:corogeneral}
 Let $\Sigma$ be a family of arrows of $\A$, and let $\Omega$, $\Omega'$ $\Omega''$ be families of $2$-cells of $\K$. Assume $T(\Omega) \subseteq \Omega$ and $\Omega' \subseteq \Omega$.
 Then the forgetful $2$-functor $U_{\omega}^{\Omega'}$ creates $\Omega'$-compatible (weighted) $\sigma$-$\omega op$-limits (with respect to $\Sigma$, $\Omega$, in the sense of Theorem \ref{teo:main}). 
  The projections of these limits are strict, and detect $\Omega''$-ness for any family $\Omega''$ such that $\soopLim{A \in \A}{FA}$ is also $\Omega''$-compatible.
\end{corollary}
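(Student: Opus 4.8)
The plan is to reduce Corollary~\ref{coro:corogeneral} to Theorem~\ref{teo:main} together with the already-established Propositions~\ref{prop:weightedcomoconical} and~\ref{prop:weightedcomoconicalcompatible}. Suppose we are given a $2$-functor $\A \mr{\overline{F}} T\text{-}Alg_{\omega}^{\Omega'}$, write $F = U_{\omega}^{\Omega'}\overline{F}$, and assume that the weighted limit $\{W,F\}_{\sigma,\omega op}$ (with respect to $\Sigma$ and $\Omega$) exists in $\K$, is $\Omega'$-compatible, and that $\overline{F(f)} \in \Omega$ for each $f \in \Sigma$. By Proposition~\ref{prop:weightedcomoconical} (applied in $\K^{co}$, via Remark~\ref{rem:oplimits}, so that the op-versions match), this weighted limit coincides with the conical $\sigma$-$\omega op$-limit $\soopLim{(x,A) \in \cc{E}l_W}{F\Diamond_W(x,A)}$ taken with respect to the family $id_\Sigma$ of arrows of $\cc{E}l_W$ and the same family $\Omega$. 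By Proposition~\ref{prop:weightedcomoconicalcompatible} this conical limit is again $\Omega'$-compatible (and, for any $\Omega''$ making the weighted limit $\Omega''$-compatible, the conical one is $\Omega''$-compatible too).

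Next I would check that the hypotheses of Theorem~\ref{teo:main} are met for the conical diagram $\cc{E}l_W \mr{\overline{F}\Diamond_W} T\text{-}Alg_{\omega}^{\Omega'}$ and the family $id_\Sigma$. The condition $T(\Omega) \subseteq \Omega$ is assumed. The key point is the requirement that $\overline{(\overline{F}\Diamond_W)(\psi)} \in \Omega$ for every $\psi \in id_\Sigma$: by definition an arrow in $id_\Sigma$ has the form $(f,id)$ with $f \in \Sigma$, and $(\overline{F}\Diamond_W)(f,id) = \overline{F(f)}$, whose structural $2$-cell lies in $\Omega$ precisely by our hypothesis $\overline{F(f)} \in \Omega$. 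Here is exactly where the assumption $\Omega' \subseteq \Omega$ is needed, as the Remark preceding the corollary explains: it guarantees that the structural $2$-cell $\overline{F(f)}$, which a priori only lies in $\Omega'$ (since $\overline{F(f)}$ is a morphism of $T\text{-}Alg_{\omega}^{\Omega'}$), can be taken to lie in $\Omega$ in the cases of interest — more precisely, the obstruction discussed in that Remark disappears. With these hypotheses verified, Theorem~\ref{teo:main} yields that $\soopLim{}{\overline{F}\Diamond_W}$ exists in $T\text{-}Alg_{\omega}^{\Omega'}$, is preserved by $U_{\omega}^{\Omega'}$, with strict projections that jointly detect $\Omega''$-ness whenever the conical limit in $\K$ is $\Omega''$-compatible.

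Finally I would transport this conclusion back to the weighted setting. By Proposition~\ref{prop:weightedcomoconical} again (now applied in $T\text{-}Alg_{\omega}^{\Omega'}$, which has the required conical limit just produced), the conical $\sigma$-$\omega op$-limit $\soopLim{}{\overline{F}\Diamond_W}$ in $T\text{-}Alg_{\omega}^{\Omega'}$ is the weighted limit $\{W,\overline{F}\}_{\sigma,\omega op}$ there, with respect to $\Sigma$ and $\overline{\Omega}$; since $U_{\omega}^{\Omega'}$ preserves the conical limit and, by Remark~\ref{rem:correspconos}, the projection arrows correspond under the isomorphism of Proposition~\ref{prop:weightedcomoconical}, it follows that $U_{\omega}^{\Omega'}$ creates the weighted $\sigma$-$\omega op$-limit, with the same strict projections. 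Likewise the detection of $\Omega''$-ness passes from the conical expression to the weighted one because the projections of the two limits are the same arrows (Remark~\ref{rem:correspconos}) and Definition~\ref{de:preserve} only refers to compositions with these projections.

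I expect the only genuine subtlety to be bookkeeping around the families of $2$-cells and the ``op'' prefix: making sure that when we pass to $\cc{E}l_W$ the family stays $id_\Sigma$ (not the larger family of arrows $(f,\varphi)$ with $\varphi$ invertible, which as the discussion after Proposition~\ref{prop:weightedcomoconical} warns does \emph{not} work for a general $\Omega$), and that $\Omega' \subseteq \Omega$ is invoked exactly at the step where Theorem~\ref{teo:main}'s hypothesis ``$\overline{F(f)} \in \Omega$'' is checked. The rest is a formal concatenation of results already proved, so there is no hard computational obstacle; the work is entirely in verifying that the hypotheses line up. One should also note that $\overline{\Omega}$ in the weighted statement is the same $\overline{\Omega}$ as in Theorem~\ref{teo:main}, namely the $2$-cells of $T\text{-}Alg_{\omega}^{\Omega'}$ that lie in $\Omega$, and that this is consistent between the conical and weighted formulations since the underlying $2$-cells of the two $Hom$-categories agree under the isomorphism of Proposition~\ref{prop:weightedcomoconical}.
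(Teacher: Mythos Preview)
Your proposal is correct and follows essentially the same approach as the paper: reduce the weighted $\sigma$-$\omega op$-limit to a conical one via Proposition~\ref{prop:weightedcomoconical} (through Remark~\ref{rem:oplimits}), invoke Proposition~\ref{prop:weightedcomoconicalcompatible} for compatibility, apply Theorem~\ref{teo:main}, and use Remark~\ref{rem:correspconos} to transfer the projection statements back. One small presentational point: you list ``$\overline{F(f)} \in \Omega$ for each $f \in \Sigma$'' among your initial assumptions, but this is not a hypothesis of the corollary --- as you yourself correctly explain a few lines later, it follows automatically from $\Omega' \subseteq \Omega$ (indeed for \emph{every} arrow of $T\text{-}Alg_\omega^{\Omega'}$, not just those hit by $\overline{F}$ on $\Sigma$), so that clause should simply be dropped from the opening sentence.
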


\begin{proof}
We can write a weighted $\sigma$-$\omega op$-limit as a conical one using Proposition \ref{prop:weightedcomoconical} 
(recall Remark \ref{rem:oplimits}). 
Since $\Omega' \subseteq \Omega$, all the arrows $\alg{g}$ of $T$-$Alg_\omega^{\Omega'}$ satisfy $\overline{g} \in \Omega$, thus the hypothesis of Theorem \ref{teo:main} requiring $\overline{F(f)} \in \Omega$ is immediately satisfied. 
Note that Proposition \ref{prop:weightedcomoconicalcompatible} shows that the conditions of compatibility are preserved by the application of Proposition \ref{prop:weightedcomoconical}, and Remark \ref{rem:correspconos} shows the correspondence between the projections, thus giving the last statement of the corollary.
\end{proof}

\begin{remark}
 We could actually require in Corollary \ref{coro:corogeneral}, instead of $\Omega' \subseteq \Omega$, a condition on the functor $\A \mr{\overline{F}} T$-$Alg_{\omega}^{\Omega'}$ for $\{W,\overline{F}\}_{\sigma,\omega op}$ to be created by $U_{\omega}^{\Omega'}$, namely that $\overline{F(f)}$ is in $\Omega$ for each $A \mr{f} B$ in $\Sigma$ such that $WA \neq \emptyset$. A more careful application of Proposition \ref{prop:weightedcomoconical}, expliciting the involved constructions, would allow to deduce this result from Theorem \ref{teo:main}.
  Writing this with full detail would lead to results slightly stronger than our Corollaries \ref{coro2}, \ref{coro3}, for example we could deduce the lifting to any $T$-$Alg_{\gamma}$ of all limits which have a strict diagram (cf. \mbox{\cite[Prop. 4.1]{Llax}}). For the sake of simplicity, since we are unsure of how much value such a generalization adds, we have refrained here from doing so.
\end{remark}

\begin{remark} \label{rem:teoop}
We leave to the reader to write in its complete form the dual versions of Theorem \ref{teo:main} and Corollary \ref{coro:corogeneral}, which state that the forgetful $2$-functors \mbox{$T$-$Alg_{co\omega}^{\Omega'} \xr{U_{co\omega}^{\Omega'}} \K$} create the corresponding types of $\sigma$-$\omega$-limits. These dual versions are equivalent to the corresponding results by Remarks \ref{rem:darvueltamorf} and \ref{rem:oplimits} (or, can be seen to hold with dual proofs).

We note that, if $\Omega'$ has only invertible $2$-cells, by the last statement in Remark \ref{rem:darvueltamorf} we may consider the forgetful $2$-functor $T$-$Alg_{\omega}^{\Omega'} \xr{U_{\omega}^{\Omega'}} \K$ instead.
\end{remark}

\begin{remark} \label{rem:hipotesisgratis}
In the situation of Theorem \ref{teo:main}, 
if $\Omega$ is any of the three choices $\Omega_s$, $\Omega_p$, $\Omega_\ell$, then the equality $\overline{\Omega} = \Omega$ makes sense and is easily seen to hold (noting for the $\Omega_p$ case that if an invertible $2$-cell satisfies \eqref{eq:alg2cell} then so does its inverse).
Also, for these choices of $\Omega$, 
the hypothesis \mbox{$T(\Omega) = \Omega$} is immediately satisfied. 
If $\Omega'$ is any of those three choices, then the hypothesis of $\Omega'$-compatibility is immediately satisfied (recall Remark \ref{rem:compatiblegratis} and Proposition \ref{prop:weightedcomoconicalcompatible}). The same holds for $\Omega''$. 
\end{remark}

Keeping Remarks \ref{rem:teoop} and \ref{rem:hipotesisgratis} in mind, the three Corollaries \ref{coro1}, \ref{coro2}, \ref{coro3} below follow immediately from Corollary \ref{coro:corogeneral} as particular cases.
Considering $\Omega = \Omega_\ell$, we have:

\begin{corollary} \label{coro1}
For $\gamma = \ell,p,s$, the forgetful $2$-functor $T$-$Alg_\gamma \mr{U_\gamma} \K$ creates (weighted) oplax limits. In particular,
if $\K$ has such limits then so does $T$-$Alg_\gamma$, and $U_\gamma$ preserves them. The projections of these limits are strict, and detect strictness and pseudoness.
\end{corollary}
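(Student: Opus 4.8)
The plan is to derive Corollary \ref{coro1} as a direct specialization of Corollary \ref{coro:corogeneral}, by checking that all of its hypotheses hold automatically for the relevant choices of the families of $2$-cells. First I would set $\Omega = \Omega_\ell$, the family of \emph{all} $2$-cells of $\K$. Since $T$ is a $2$-functor, $T(\Omega_\ell) \subseteq \Omega_\ell$ trivially, so the hypothesis $T(\Omega) \subseteq \Omega$ of Corollary \ref{coro:corogeneral} is satisfied. For the parameter $\Omega'$ governing the target $2$-category of morphisms, I would take $\Omega' = \Omega_\gamma$ for $\gamma = \ell, p, s$ in turn, so that $T$-$Alg_{\omega}^{\Omega'} = T$-$Alg_\gamma$ by Example \ref{ex:Talgspl}; in every case $\Omega' \subseteq \Omega_\ell = \Omega$, so the remaining structural hypothesis $\Omega' \subseteq \Omega$ of the corollary holds.

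Next I would observe that with $\Omega = \Omega_\ell$ the family $\overline{\Omega}$ of Theorem \ref{teo:main} equals $\Omega_\ell$ (restricted to $T$-$Alg_\gamma$), i.e.\ \emph{all} algebra $2$-cells, so that the conical $\sigma$-$\omega op$-limits of Corollary \ref{coro:corogeneral} taken with respect to $\Sigma$ and $\Omega = \Omega_\ell$ are precisely oplax limits (see item 1a of Example \ref{ex:varioslimits} together with Remark \ref{rem:oplimits}); here $\Sigma$ may be taken to be $\A_0$ or indeed arbitrary, since when $\Omega = \Omega_\ell$ the distinguished family $\Sigma$ plays no role. The $\Omega'$-compatibility hypothesis of Corollary \ref{coro:corogeneral} is then vacuous: by Remark \ref{rem:compatiblegratis} every conical $\sigma$-$\omega$-limit is $\Omega_\gamma$-compatible for $\gamma = \ell, p, s$, and by Proposition \ref{prop:weightedcomoconicalcompatible} the same passes to the weighted case. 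Hence \emph{every} oplax limit that exists in $\K$ satisfies the hypotheses, and Corollary \ref{coro:corogeneral} gives that $U_\gamma$ creates it. Finally, for the statement about the projections I would take $\Omega'' = \Omega_s$ and $\Omega'' = \Omega_p$: again by Remark \ref{rem:compatiblegratis} and Proposition \ref{prop:weightedcomoconicalcompatible} the oplax limit in $\K$ is automatically $\Omega''$-compatible for both choices, so Corollary \ref{coro:corogeneral} yields that the (strict) projections jointly detect strictness and pseudoness.

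There is essentially no obstacle here: the entire content is bookkeeping, matching the abstract parameters $(\Sigma, \Omega, \Omega', \Omega'')$ of Corollary \ref{coro:corogeneral} to the concrete situation and invoking the ``compatibility is free'' facts assembled in Remark \ref{rem:hipotesisgratis}. The only point requiring a word of care is the passage from the oplax to the lax/pseudo/strict \emph{target}: one must note that taking $\Omega' = \Omega_\gamma$ with $\gamma \neq \ell$ is legitimate precisely because $\Omega_\gamma \subseteq \Omega_\ell$, which is what makes the hypothesis $\Omega' \subseteq \Omega$ hold and thereby licenses the use of Proposition \ref{prop:weightedcomoconical} inside the proof of Corollary \ref{coro:corogeneral}. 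Everything else — existence of $(L,l)$, strictness of the $\pi_A$, the one- and two-dimensional universal properties — is already packaged inside Theorem \ref{teo:main} and Corollary \ref{coro:corogeneral}, so the proof reduces to a sentence or two citing them with the above choices.
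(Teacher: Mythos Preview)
Your proposal is correct and follows exactly the paper's approach: the paper states that Corollary \ref{coro1} follows immediately from Corollary \ref{coro:corogeneral} by taking $\Omega = \Omega_\ell$ and invoking Remark \ref{rem:hipotesisgratis}, and you have simply unpacked that one-line justification into its constituent checks (the hypotheses $T(\Omega)\subseteq\Omega$, $\Omega'\subseteq\Omega$, automatic $\Omega'$- and $\Omega''$-compatibility, and the identification of $\sigma$-$\omega op$-limits for $\Omega_\ell$ with oplax limits).
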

 
\begin{remark}
The case $\gamma = \ell$ of Corollary \ref{coro1} is \cite[Theorem 4.8]{Llax}, which is the most general limit lifting result of that article.

Also, considering $\gamma = p$ and recalling Remark \ref{rem:teoop}, we have the lax case of \mbox{\cite[Theorem 2.6]{K2dim}:} the forgetful $2$-functor $T$-$Alg_p \mr{U_p} \K$ creates lax limits. We note that this result is also a consequence of Corollary \ref{coro2} below.
\end{remark}

Considering now $\Omega = \Omega_p$ in Corollary \ref{coro:corogeneral}, we have (recall Example \ref{ex:varioslimits}, item 2):

\begin{corollary} \label{coro2}
For $\gamma = p,s$, the forgetful $2$-functor $T$-$Alg_\gamma \mr{U_\gamma} \K$ creates (weighted) \mbox{$\sigma$-limits.} In particular,
if $\K$ has such limits then so does $T$-$Alg_\gamma$, and $U_\gamma$ preserves them. The projections of these limits are strict, and detect strictness.
\end{corollary}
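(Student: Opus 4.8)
The plan is to obtain this statement as the special case $\Omega = \Omega_p$ of Corollary~\ref{coro:corogeneral}. Concretely, for $\gamma = p$ I would take $\Omega' = \Omega_p$ and for $\gamma = s$ take $\Omega' = \Omega_s$, so that in either case $T$-$Alg_{\omega}^{\Omega'} = T$-$Alg_\gamma$ by Example~\ref{ex:Talgspl}, and I would take $\Omega'' = \Omega_s$. The first task is to check that the hypotheses of Corollary~\ref{coro:corogeneral} hold with these choices: the inclusion $\Omega' \subseteq \Omega$ is immediate since $\Omega_s \subseteq \Omega_p$; the condition $T(\Omega_p) \subseteq \Omega_p$ holds because a $2$-functor preserves invertible $2$-cells (this is part of what Remark~\ref{rem:hipotesisgratis} records for the $\Omega_p$ case, where moreover $\overline{\Omega} = \Omega_p$); and the $\Omega'$-compatibility hypothesis is automatically satisfied for $\Omega' \in \{\Omega_s,\Omega_p\}$ by Remark~\ref{rem:compatiblegratis}, and by Proposition~\ref{prop:weightedcomoconicalcompatible} in the weighted case.

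The second task is to translate the conclusion of Corollary~\ref{coro:corogeneral} into the language of the statement. By Example~\ref{ex:varioslimits}(2), a $\sigma$-$\omega$-limit taken with respect to $\Omega = \Omega_p$ is precisely a $\sigma$-limit in the sense of \cite{DDS1} (a pseudolimit when $\Sigma = \A_0$); hence, dually, the $\sigma$-$\omega op$-limits produced by Corollary~\ref{coro:corogeneral} are $\sigma op$-limits. The one point that requires attention is the ``op'' prefix --- Corollary~\ref{coro:corogeneral} as stated concerns $\sigma$-$\omega op$-limits, whereas the statement to be proved speaks of $\sigma$-limits with no ``op''. This is exactly the situation dealt with in Remark~\ref{rem:teoop}: the dual form of Corollary~\ref{coro:corogeneral} says that $U_{co\omega}^{\Omega'}$ creates the corresponding $\sigma$-$\omega$-limits, and since $\Omega' \in \{\Omega_s,\Omega_p\}$ consists of invertible $2$-cells, the isomorphism $T$-$Alg_{co\omega}^{\Omega'} \cong T$-$Alg_{\omega}^{\Omega'}$ of Remark~\ref{rem:darvueltamorf} permits replacing $U_{co\omega}^{\Omega'}$ by $U_\gamma$ throughout. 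Combining these observations yields that $U_\gamma$ creates (weighted) $\sigma$-limits, and in particular lifts those existing in $\K$ and preserves them.

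Finally, the strictness of the projections is read off directly from Corollary~\ref{coro:corogeneral}, and the detection of strictness follows by instantiating $\Omega'' = \Omega_s$ there: this is legitimate because \emph{every} limit is $\Omega_s$-compatible, again by Remark~\ref{rem:compatiblegratis} (and Proposition~\ref{prop:weightedcomoconicalcompatible} for the weighted case), so the projections detect $\Omega_s$-ness, which is by definition strictness. I do not anticipate a genuine obstacle: the argument is essentially bookkeeping, matching the auxiliary families $\Omega,\Omega',\Omega''$ to the two values of $\gamma$ and then invoking the already-proved Corollary~\ref{coro:corogeneral}. The only delicate point, flagged above, is tracking the ``op''/``co'' dualizations correctly, which is precisely why Remarks~\ref{rem:teoop}, \ref{rem:darvueltamorf} and~\ref{rem:hipotesisgratis} were put in place in advance.
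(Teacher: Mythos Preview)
Your proposal is correct and follows essentially the same approach as the paper: the paper simply states that Corollary~\ref{coro2} follows immediately from Corollary~\ref{coro:corogeneral} by setting $\Omega = \Omega_p$ and invoking Remarks~\ref{rem:teoop} and~\ref{rem:hipotesisgratis} together with Example~\ref{ex:varioslimits}(2). You have spelled out exactly these choices and verifications in more detail, including the handling of the ``op'' prefix via Remark~\ref{rem:teoop} and the automatic $\Omega'$- and $\Omega''$-compatibility via Remark~\ref{rem:compatiblegratis}, so there is nothing to add.
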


\begin{remark} \label{rem:casosk2dimysigmalim}
Recall that $\sigma$-limits generalize both pseudo and lax limits. Then the case $\gamma = p$ of Corollary \ref{coro2} generalizes \cite[Theorem 2.6]{K2dim} to arbitrary $\sigma$-limits. 

Considering the examples of \cite[Example 6.6]{K2dim}, from Corollary \ref{coro2} it also follows that \mbox{$\sigma$-limits} are computed pointwise in the $Hom_p$ and $Hom_s$ $2$-categories, which is a fundamental fact for the theory of flat $2$-functors (see \cite[Corollary 2.6.4]{DDS1}, where an independent proof of this fact is given without using monad theory).
\end{remark}

Finally, considering $\Omega = \Omega_s$, we recover the classic result of \mbox{$\Cat$-enriched} monad theory (\cite[Prop. II.4.8 a)]{D}):

\begin{corollary} \label{coro3}
The forgetful $2$-functor $T$-$Alg_s \mr{U_s} \K$ creates (weighted) limits (that is $s$-limits). In particular,
if $\K$ has such limits then so does $T$-$Alg_s$, and $U_s$ preserves them.
\end{corollary}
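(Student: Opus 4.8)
The plan is to derive Corollary \ref{coro3} as the extreme special case of Corollary \ref{coro:corogeneral} in which $\Omega = \Omega' = \Omega_s$ and $\Sigma = \A_0$ is the family of \emph{all} arrows of the indexing $2$-category $\A$ (no auxiliary family $\Omega''$ being needed). First I would note that, by Example \ref{ex:Talgspl}, for this choice one has $T$-$Alg_\omega^{\Omega'} = T$-$Alg_s$ and $U_\omega^{\Omega'} = U_s$, so that the assertion to be proved is literally the $\Omega_s$-instance of the conclusion of Corollary \ref{coro:corogeneral}.

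Next I would verify that the hypotheses of Corollary \ref{coro:corogeneral} hold for this choice of families. The inclusion $\Omega' \subseteq \Omega$ is the equality $\Omega_s = \Omega_s$. The condition $T(\Omega) \subseteq \Omega$ holds because a $2$-functor sends every identity $2$-cell to an identity $2$-cell, so $T(\Omega_s) \subseteq \Omega_s$; this is also recorded in Remark \ref{rem:hipotesisgratis}. By that same remark the derived family $\overline{\Omega}$ of Theorem \ref{teo:main} equals $\Omega_s$ here, and the $\Omega'$-compatibility hypothesis is automatic: every conical $\sigma$-$\omega$-limit is $\Omega_s$-compatible by Remark \ref{rem:compatiblegratis}, and the weighted case follows through Proposition \ref{prop:weightedcomoconicalcompatible}. (Choosing in addition $\Omega'' = \Omega_s$, which is again automatically compatible, one even obtains that the projections detect strictness, recovering the full $\cc{V}$-enriched statement, although this is not part of the corollary as stated.)

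Then I would translate the output of Corollary \ref{coro:corogeneral} into classical language. With $\Sigma = \A_0$ and $\Omega = \Omega_s$ every structural $2$-cell of a $\sigma$-$\omega$-cone (conical or weighted) is forced to be an identity, so reversing its direction is vacuous; hence a $\sigma$-$\omega op$-limit agrees with a $\sigma$-$\omega$-limit, which by Example \ref{ex:varioslimits}(3) is exactly the strict weighted limit, that is the $s$-limit. Thus Corollary \ref{coro:corogeneral} says precisely that $U_s$ creates all weighted limits of $T$-algebras; the ``In particular'' clause --- that $T$-$Alg_s$ has all such limits whenever $\K$ does, and that $U_s$ preserves them --- is the usual formal consequence of creation.

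I do not expect any genuine obstacle: the content is entirely inherited from Theorem \ref{teo:main} and Corollary \ref{coro:corogeneral}, and the only point requiring care is the bookkeeping identification of the abstract $\sigma$-$\omega op$-limit machinery, specialized to the strict choice of families, with the classical $\Cat$-enriched --- indeed $\cc{V}$-enriched --- notion of weighted limit, together with the observation that every side condition appearing in the more general results is vacuous for $\Omega = \Omega' = \Omega'' = \Omega_s$. All of these are furnished by Remark \ref{rem:hipotesisgratis} and Example \ref{ex:varioslimits}.
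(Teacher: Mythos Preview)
Your proposal is correct and follows essentially the same approach as the paper: both derive the corollary as the special case $\Omega = \Omega' = \Omega_s$, $\Sigma = \A_0$ of Corollary~\ref{coro:corogeneral}, invoking Remark~\ref{rem:hipotesisgratis} to discharge the hypotheses and Example~\ref{ex:varioslimits}(3) to identify the resulting $\sigma$-$\omega op$-limits with strict weighted limits. Your explicit observation that the ``op'' is vacuous because identity $2$-cells are self-dual is exactly the content of Remark~\ref{rem:teoop} in this degenerate case.
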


\bibliographystyle{unsrt}

\end{document}